\newtheorem{theorem}{Theorem}
\theoremstyle{plain}
\newtheorem{corollary}{Corollary}
\newtheorem{definition}{Definition}
\newtheorem{lemma}{Lemma}
\newtheorem{proposition}{Proposition}
\newtheorem{remark}{Remark}
\numberwithin{equation}{section}
\newcommand{\vertiii}[1]{{\left\vert\kern-0.25ex\left\vert\kern-0.25ex\left\vert #1 
    \right\vert\kern-0.25ex\right\vert\kern-0.25ex\right\vert}}
\newcommand{\vertiiin}[1]{{\vert\kern-0.25ex\vert\kern-0.25ex\vert #1 
    \vert\kern-0.25ex\vert\kern-0.25ex\vert}}
\begin{document}
\title[Spectral stability for $(-\Delta_p)_{\delta}^s$]{Spectral stability for the peridynamic fractional $p\,$-Laplacian}
\author[Jos\'e C. Bellido]{Jos\'e C. Bellido}
\address{Jos\'e C. Bellido\hfill\break\indent
E.T.S.I. Industriales \& INEI, Departamento de Matem\'aticas \hfill\break\indent 
Universidad de Castilla-La Mancha\hfill\break\indent 
Ciudad Real, E-13071 \hfill\break\indent 
Spain \hfill\break\indent } 
\email{josecarlos.bellido@uclm.es}

\author[Alejandro Ortega]{Alejandro Ortega}
\address{Alejandro Ortega\hfill\break\indent
Departamento de Matem\'aticas\hfill\break\indent 
Universidad Carlos III de Madrid\hfill\break\indent 
Av. de la Universidad 30 \hfill\break\indent 
28911 Legan\'es (Madrid), SPAIN \hfill\break\indent } 
\email{alortega@math.uc3m.es}

\thanks{This work was carried out while the second author was a postdoctoral researcher at Universidad de Castilla-La Mancha in Ciudad Real funded by project  MTM2017-83740-P of the {\it Agencia Estatal de Investigaci\'on, Ministerio de Ciencia e Innovaci\'on} (Spain), which supports this investigation.}
\date{\today}
\subjclass[2010]{Primary 35R11, 35P30, 49J45; Secondary 49J35, 45G05, 47G20} %
\keywords{Fractional $p\,$-Laplacian,  Nonlinear Eigenvalue Problems, Nonlocal Elliptic Problems, $\Gamma$-Convergence, Peridynamics, Volume Constrained Problems}%

\begin{abstract} 
In this work we analyze the behavior of the spectrum of the peridynamic fractional $p\,$-Laplacian, $(-\Delta_p)_{\delta}^{s}$, under the limit process $\delta\to0^+$ or $\delta\to+\infty$. We prove spectral convergence to the classical $p\,$-Laplacian under a suitable scaling as $\delta\to 0^+$ and to the fractional $p\,$-Laplacian as $\delta \to +\infty$.
\end{abstract}
\maketitle


\section{Introduction}
Given a {\it horizon} $\delta>0$, $s\in(0,1)$ and $p\in(1,+\infty)$, we study the spectral stability of the \textit{peridynamic fractional $p\,$-Laplacian}, denoted by $(-\Delta_p)_{\delta}^s$ and defined as
\begin{equation}\label{defpLaplacian}
(-\Delta_p)_{\delta}^su(x)=2P.V.\int_{B(x,\delta)}\frac{|u(x)-u(y)|^{p-2}(u(x)-u(y))}{|x-y|^{n+ps}}dy,
\end{equation}
where P.V. stands for principal value. The {\it peridynamic} denomination is inspired by {\it Peridynamic Theory in Solid Mechanics} \cite{Si} (see also \cite{BellCorPed, DeliaGun}). 
In particular, we address the spectral stability under the limits $\delta\to0^+$ or $\delta\to+\infty$ of the nonlocal nonlinear eigenvalue problem
\begin{equation}\label{eigenproblem}
     \left\{\begin{array}{rl}
     (-\Delta_p)_{\delta}^su=\lambda|u|^{p-2}u &\quad\mbox{in}\quad \Omega,\\
                         u=0\mkern+59mu &\quad\mbox{on}\quad \partial_{\delta}\Omega,\\
		 \end{array}\right.
				\tag{$EP_{\delta}^{s,p}$}
\end{equation}
where $p\in(1,+\infty)$, $s\in(0,1)$, $\Omega\subset\mathbb{R}^N$ is a bounded domain with Lipschitz boundary and $\partial_\delta\Omega$, the nonlocal boundary, is given by 
\begin{equation*}
\partial_{\delta}\Omega=\{y\in\mathbb{R}^N\backslash\Omega: |x-y|<\delta\ \text{for}\ x\in\Omega\}.
\end{equation*}

Study of both local and nonlocal nonlinear eigenvalue problems is a delicate issue considerably more difficult than its linear counterpart. For the local nonlinear eigenvalue problem 
\begin{equation}\label{eigenproblem0}
     \left\{\begin{array}{rl}
     -\Delta_pu=\lambda|u|^{p-2}u &\quad\mbox{in}\quad \Omega,\\
                         u=0\mkern+59mu &\quad\mbox{on}\quad\! \partial\Omega,\\
		 \end{array}\right.
				\tag{$EP_{0}^{1,p}$}
\end{equation}
associated to the classical $p\,$-Laplace operator, $-\Delta_p=-div(|\nabla u|^{p-2}\nabla u)$, it was proved, cf. \cite{Azorero1987}, that \eqref{eigenproblem0} admits a countable set of eigenvalues $\{\lambda_{k}^{0,1,p}\}_{k\in\mathbb{N}}$ such that
\begin{equation*}
0<\lambda_{1}^{0,1,p}<\lambda_{2}^{0,1,p}\leq\ldots\leq\lambda_{k}^{0,1,p}\to+\infty\quad \text{as}\ k\to+\infty.
\end{equation*}
Such a sequence of variational eigenvalues is obtained using a minimax variational principle based on certain topological index as follows. Let us define
\begin{equation*}
\mathcal{S}^{1,p}(\Omega)\vcentcolon=\{u\in W_0^{1,p}(\Omega): \|u\|_{L^p(\Omega)}=1\}
\end{equation*}
and
\begin{equation*}
\mathcal{H}_k^{1,p}\vcentcolon=\left\{A\subset\mathcal{S}^{1,p}(\Omega): A\text{ symmetric and compact, }i(A)\geq k\right\},
\end{equation*}
where $i(A)$ denotes the \textit{Krasnosel'skii genus} of $A$,
\begin{equation*}
i(A)=\inf\left\{k\in\mathbb{N}:\ \exists \text{ a continuous odd map }f: A\mapsto\mathbb{S}^{k-1} \right\},
\end{equation*}
with the convention $i(A)=+\infty$ if no such integer exists and $\mathbb{S}^{N}$ the $N$-dimensional unitary sphere. Then, the nonlinear eigenvalues $\{\lambda_{k}^{0,1,p}\}_{k\in\mathbb{N}}$ are given as minimax values,
\begin{equation}\label{auto}
\lambda_{k}^{0,1,p}=\inf\limits_{A\in\mathcal{H}_k^{1,p}}\max\limits_{u\in A}\|\nabla u\|_{L^p(\Omega)}^p.
\end{equation}
In particular, the first eigenvalue is given as a global minimum,
\begin{equation*}
\lambda_{1}^{0,1,p}=\min\limits_{u\in\mathcal{S}^{1,p}(\Omega)}\|\nabla u\|_{L^p(\Omega)}^p,
\end{equation*}
while the second eigenvalue is given, cf. \cite{Cuesta1999}, as a mountain pass-type minimax value,
\begin{equation*}
\lambda_{2}^{0,1,p}=\inf\limits_{\gamma\in\Gamma(-u_1,u_1)}\max\limits_{u\in\gamma([0,1])}\|\nabla u\|_{L^p(\Omega)}^p,
\end{equation*}
where $u_1$ is the minimizer associated to $\lambda_{1}^{0,1,p}$ and $\Gamma(-u_1,u_1)$ is the set of continuous paths on $\mathcal{S}^{1,p}(\Omega)$ connecting $-u_1$ and $u_1$.\newline 
It is not known if the sequence $\{\lambda_k^{0,1,p}\}_{k\in\mathbb{N}}$ exhausts all possible eigenvalues, except for the linear case $p=2$, where \eqref{auto} provides, cf. \cite[Theorem A.2]{BrascoPariniSquassina}, an exhaustive sequence for the usual eigenvalues of the Laplacian, 
\begin{equation*}
\lambda_{k}^{0,1,2}=\min\limits_{\mathcal{V}_k^{1,2}}\max\limits_{\mathcal{V}_k^{1,2}\cap\mathcal{S}^{1,2}(\Omega)}\int_{\Omega}|\nabla u|^2dx,
\end{equation*}
where 
\begin{equation*}
\mathcal{V}_k^{1,2}=\{A\subset W_0^{1,2}(\Omega):\ A\text{ vector space with }dim(A)\geq k\}.
\end{equation*}
In addition, the first eigenfunction $\varphi_1^{0,1,p}$ is unique and positive in $\Omega$, cf. \cite{Belloni2002,Vazquez1984}, while eigenfunctions associated to any other eigenvalue are sign-changing, cf. \cite{Kawohl2006}. We refer to \cite{Audoux2018} and references therein for other properties of symmetry of eigenfunctions and multiplicity of eigenvalues.\newline
On the other hand, the nonlocal nonlinear eigenvalue problem
\begin{equation}\label{eigenproblem_inf}
     \left\{\begin{array}{rl}
     (-\Delta_p)_{\infty}^su=\lambda|u|^{p-2}u &\quad\mbox{in}\quad \Omega,\\
                         u=0\mkern+59mu &\quad\mbox{on}\quad\!\Omega^c=\mathbb{R}^N\backslash\Omega,\\
		 \end{array}\right.
				\tag{$EP_{\infty}^{s,p}$}
\end{equation}
associated to the fractional $p\,$-Laplacian,
\begin{equation*}
(-\Delta_p)_{\infty}^su(x)=2P.V.\int_{\mathbb{R}^N}\frac{|u(x)-u(y)|^{p-2}(u(x)-u(y))}{|x-y|^{n+ps}}dy,
\end{equation*}
has spectral properties analogous to its local counterpart, cf. \cite{Brasco2016,Franzina,Iannizzotto2014,Lindgren2013}. Indeed, as it happens in the local case $s=1$, the eigenvalue problem \eqref{eigenproblem_inf} admits a sequence of increasing and positive eigenvalues 
\begin{equation*}
0<\lambda_{1}^{\infty,s,p}<\lambda_{2}^{\infty,s,p}\leq\ldots\leq\lambda_{k}^{\infty,s,p}\to+\infty\quad \text{as}\ k\to+\infty,
\end{equation*}
that can be characterized as 
\begin{equation*}
\lambda_{k}^{0,1,p}=\inf\limits_{A\in\mathcal{H}_k^{s,p}}\max\limits_{v\in A}\,[v]_{W^{s,p}(\mathbb{R}^N)}^p,
\end{equation*}
where $[\cdot]_{W^{s,p}(\mathbb{R}^N)}$ denotes the Gagliardo semi-norm
\begin{equation}\label{pregagli}
[v]_{W^{s,p}(\mathbb{R}^N)}^p\vcentcolon=\int_{\mathbb{R}^N}\int_{\mathbb{R}^N}\frac{|v(x)-v(y)|^p}{|x-y|^{N+ps}}dydx,
\end{equation}
and 
\begin{equation*}
\mathcal{H}_k^{s,p}\vcentcolon=\left\{A\subset\mathcal{S}^{s,p}(\Omega): A\text{ symmetric and compact, }i(A)\geq k\right\},
\end{equation*}
with
\begin{equation*}
\mathcal{S}^{s,p}(\Omega)\vcentcolon=\{u\in \mathcal{X}_0^{s,p}(\Omega): \|u\|_{L^p(\Omega)}=1\}
\end{equation*}
and
\begin{equation*}
\mathcal{X}_0^{s,p}(\Omega)\vcentcolon=\left\{u:\mathbb{R}^N\mapsto\mathbb{R}:[u]_{W^{s,p}(\mathbb{R}^N)}<+\infty,\ u=0\text{ in }\Omega^c\right\}.
\end{equation*}
As for the local case, the first eigenfunction $\varphi_1^{\infty,s,p}$ is unique and positive in $\Omega$, cf. \cite{Franzina}. Also, except for the linear case $p=2$, it is not known if the sequence $\{\lambda_{k}^{\infty,s,p}\}_{k\in\mathbb{N}}$ exhausts completely the spectrum of $(-\Delta_p)_{\infty}^s$.

Besides that, the continuity of the limit $s\to1^-$, for the eigenvalue problem \eqref{eigenproblem_inf} has been studied in \cite{BrascoPariniSquassina} where, by means of $\Gamma$-convergence arguments, it is proved that, for any $p\in(1,+\infty)$,
\begin{equation}\label{squa}
\lim\limits_{s\to1^-}(1-s)\lambda_{n}^{\infty,s,p}=K(N,p)\lambda_{n}^{0,1,p}\quad \text{for all}\ n\in\mathbb{N},
\end{equation}
with
\begin{equation}\label{squconst}
K(N,p)\vcentcolon=\frac{1}{p}\int_{\mathbb{S}^{N-1}}| e\cdot z |^pd\sigma(z),\quad e\in\mathbb{S}^{N-1},
\end{equation}
as well as the convergence of the corresponding normalized eigenfunctions in a suitable fractional norm. Observe that, due to symmetry reasons, the definition of $K(N,p)$ is indeed independent of the direction $e\in\mathbb{S}^{N-1}$. This result relies on an essential way on the fact that, under certain conditions, minimax values are continuous with respect to $\Gamma$-convergence,  cf. \cite{DegioMarco}. This nonlocal-to-local transition for nonlinear problems has been also addressed by means of $\Gamma$-convergence techniques in \cite{BonderSalort}, where stability as $s\to 1^-$ of solutions of fractional $p\,$-Laplacian equations with general nonlinear source terms is studied and, as a consequence, the stability as $s\to 1^-$ of the eigenvalues of the fractional $p\,$-Laplacian is derived.
\vspace{0.2cm}

Using the same techniques as for the nonlocal nonlinear eigenvalue problem \eqref{eigenproblem_inf}, a sequence of increasing and positive eigenvalues $\{\lambda_{k}^{\delta,s,p}\}_{k\in\mathbb{N}}$ of \eqref{eigenproblem} can be constructed. In this paper we show that, if we take $\delta\to0^+$, the variational eigenvalues $\{\lambda_{k}^{\delta,s,p}\}_{k\in\mathbb{N}}$ will converge, once appropriately scaled, to eigenvalues $\{\lambda_{k}^{0,1,p}\}_{k\in\mathbb{N}}$ of the local nonlinear eigenvalue problem \eqref{eigenproblem0}. 
We will also obtain that the associated eigenfunctions $\{\varphi_k^{\delta,s,p}\}_{k\in\mathbb{N}}$ of \eqref{eigenproblem} will converge, in the $L^p(\Omega)$-norm, to eigenfunctions $\{\varphi_k^{0,1,p}\}_{k\in\mathbb{N}}$ of \eqref{eigenproblem0}. When $\delta\to +\infty$, an analogous convergence result of eigenvalues and eigenfunctions of \eqref{eigenproblem} to those of \eqref{eigenproblem_inf} is also obtained. 

The linear case $p=2$ has been addressed by the authors, cf. \cite{bellido2020restricted}, showing the former spectral convergence behavior. In this linear setting the results are proved using $\Gamma$-convergence techniques together with the hilbertian structure of the associated functional space. In particular, the behavior of $(EP_{\delta}^{s,2})$ as $\delta \to 0^+$ is studied by combining a general $\Gamma$-convergence result for nonlocal nonlinear functionals, cf. \cite{BellCorPed}, together with the characterization of the eigenvalues and eigenfunctions based on projections onto orthogonal complements. The behavior as $\delta\to +\infty$ is an easy consequence of the monotonicity of the energy associated to $(-\Delta_p)^s_\delta$ with respect to $\delta>0$ but also strongly relies on the hilbertian structure of the problem. We refer to \cite{BellCorPed} and references therein for this subject in the linear framework. 

Since in the nonlinear case we have no such a Hilbert structure this approach is no longer valid. Nevertheless, we still use the $\Gamma$-convergence result of \cite{BellCorPed} that provides us with the limit energy functional of a suitable scaled version of the energy functional associated to \eqref{eigenproblem} as $\delta \to ^+$. Next, we prove the convergence of the minimax values by using a $\Gamma$-convergence result of \cite{DegioMarco} which guaranties the continuous dependence of the minimax values under the $\Gamma$-convergence of the functionals. Finally, we conclude the convergence of the eigenfunctions by following the approach of \cite{BrascoPariniSquassina}.

As in \cite{bellido2020restricted}, the obtained results lead us to claim that the peridynamic fractional $p\,$-Laplacian is an intermediate operator bridging the classical $p\,$-Laplacian $-\Delta_p$ and the fractional $p\,$-Laplacian $(-\Delta_p)_{\infty}^s$ for a fixed $s\in(0,1)$. 
\vspace{0.2cm} 

\textbf{Organization of the paper}: In Section \ref{functionalsetting} we introduce the appropriate functional setting to deal with the eigenvalue problem associated to $(-\Delta_p)_{\delta}^s$ and the precise statement of the main results of this work. In Section \ref{preliminaryresults} we collect the key results about $\Gamma$-convergence needed here. In Section \ref{horizon0} we prove Theorem \ref{theigen} which shows the convergence to the $p\,$-Laplacian spectrum as $\delta \to 0^+$. We finish in Section \ref{horizoninfty} with the proof Theorem \ref{theigen2} about the convergence to the fractional $p\,$-Laplacian spectrum as $\delta\to+\infty$. 
\section{Functional setting and Main Results}\label{functionalsetting}
The natural setting for equations involving the fractional $p\,$-Laplacian $(-\Delta_p)_{\infty}^s$ is the fractional Sobolev space $W^{s,p}(\mathbb{R}^N)$, defined as 
\begin{equation*}
W^{s,p}(\mathbb{R}^N)=\left\{v\in L^p(\mathbb{R}^N)\;:\; [v]_{W^{s,p}(\mathbb{R}^N)}<+\infty\right\}.
\end{equation*}
In order to deal with boundary value problems involving homogeneous Dirichlet boundary conditions $u=0$ in $\Omega^c=\mathbb{R}^N\backslash\Omega$, we consider the subspace
\begin{equation*}
\mathcal{X}_0^{\infty,s,p}(\Omega)\vcentcolon=\left\{u\in L^p(\Omega):[u]_{W^{s,p}(\mathbb{R}^N)}<+\infty,\ u=0\text{ in }\Omega^c\right\}.
\end{equation*}
Since $\Omega$ is a bounded domain with Lipschitz boundary, the space $\mathcal{X}_0^{\infty,s,p}(\Omega)$ coincides with the space defined as the completion of $C_0^{\infty}(\Omega)$ (smooth, compactly supported in $\Omega$ functions) respect to the semi-norm $[\cdot]_{W^{s,p}(\mathbb{R}^N)}$, cf. \cite[Prop. B.1]{BrascoPariniSquassina}. Observe that, since $u=0$ in $\Omega^c$, we have 
\begin{equation}\label{Gagliseminorm}
[v]_{W^{s,p}(\mathbb{R}^N)}^p=\iint\limits_{\mathcal{D}}\frac{|v(x)-v(y)|^p}{|x-y|^{N+ps}}dydx,
\end{equation}
where $\mathcal{D}=(\mathbb{R}^N\times\mathbb{R}^N)\backslash(\Omega^{c}\times\Omega^{c})$. 
Moreover, thanks to the fractional Sobolev inequality \cite{DiNezzaPalaValdi}, the semi-norm $[\cdot]_{W^{s,p}(\mathbb{R}^N)}^p$ is actually a norm on $\mathcal{X}_0^{\infty,s,p}(\Omega)$.
\vspace{0.2cm}

Next, given an horizon $\delta>0$, let us define the \textit{nonlocally} completed domain
\begin{equation*}
\Omega_{\delta}=\Omega\cup\partial_{\delta}\Omega=\{y\in\mathbb{R}^N:\ |x-y|<\delta,\ \text{for}\ x\in\Omega\},
\end{equation*}
and the functional space associated to $(-\Delta_p)_{\delta}^s$ as
\begin{equation*}
\mathcal{X}^{\delta,s,p}(\Omega_{\delta})\vcentcolon=\left\{u\in L^p(\Omega_{\delta}):[u]_{W^{\delta,s,p}(\Omega_{\delta})}<+\infty\right\}.
\end{equation*}
where $[\cdot]_{W^{\delta,s,p}(\Omega_{\delta})}$ denotes the semi-norm
\begin{equation*}
[u]_{W^{\delta,s,p}(\Omega_{\delta})}^p\vcentcolon=\int_{\Omega_{\delta}}\int_{\Omega_{\delta}\cap B(x,\delta)}\frac{|u(x)-u(y)|^p}{|x-y|^{N+sp}}dydx.
\end{equation*}
\begin{proposition}\label{belcor}\cite[Proposition 6.1]{BellCor} Let $s\in(0,1)$, $1\leq p<\infty$, $\delta>0$ and $\Omega\subset\mathbb{R}^N$ be a bounded Lipschitz domain. Then, there exists $C=C(\delta)>0$ such that for all $u\in W^{s,p}(\Omega)$,
\begin{equation*}
\int_{\Omega}\int_{\Omega}\frac{|u(x)-u(y)|^p}{|x-y|^{N+sp}}dydx\leq C\int_{\Omega}\int_{\Omega\cap B(x,\delta)}\frac{|u(x)-u(y)|^p}{|x-y|^{N+sp}}dydx.
\end{equation*}
\end{proposition}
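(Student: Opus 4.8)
The plan is to separate the contributions of \emph{near} and \emph{far} pairs of points; the equivalence of the truncated and the full Gagliardo seminorm is then a chaining (covering) argument. Splitting the inner integral over $\Omega$ into its part on $\Omega\cap B(x,\delta)$, which reproduces the right-hand side of the proposition, and its part on $\Omega\setminus B(x,\delta)$, and using $|x-y|^{-N-sp}\le\delta^{-N-sp}$ on the latter set, one obtains
\[
\int_{\Omega}\int_{\Omega\setminus B(x,\delta)}\frac{|u(x)-u(y)|^p}{|x-y|^{N+sp}}\,dy\,dx\le\delta^{-N-sp}\int_{\Omega}\int_{\Omega}|u(x)-u(y)|^p\,dy\,dx.
\]
Hence it suffices to prove the \emph{oscillation estimate}
\[
\int_{\Omega}\int_{\Omega}|u(x)-u(y)|^p\,dy\,dx\le C'\,\mathcal{E},\qquad \mathcal{E}:=\int_{\Omega}\int_{\Omega\cap B(x,\delta)}\frac{|u(x)-u(y)|^p}{|x-y|^{N+sp}}\,dy\,dx,
\]
with $C'=C'(N,s,p,\delta,\Omega)$, after which the proposition follows with $C=1+\delta^{-N-sp}C'$.

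For the oscillation estimate, cover $\overline{\Omega}$ by finitely many balls $B_i=B(z_i,\rho)$, $i=1,\dots,M$, with centres $z_i\in\Omega$ and radius $\rho$ fixed with $0<\rho<\delta/2$, and set $\omega_i=B_i\cap\Omega$, discarding any empty sets. Each $\omega_i$ is open, nonempty (hence of positive measure), contained in $\Omega$, and of diameter $<2\rho<\delta$; consequently $\omega_i\times\omega_i\subset\{(x,z):x\in\Omega,\ z\in\Omega\cap B(x,\delta)\}$, and the same holds for the analogous products involving an overlap $\omega_i\cap\omega_j$. Moreover $\min_i|\omega_i|$ and $\min|\omega_i\cap\omega_j|$ (over pairs with $\omega_i\cap\omega_j\ne\emptyset$) are positive constants depending only on $\Omega$ and $\delta$. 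Since $\Omega$ is a (connected) domain, the nerve of $\{\omega_i\}$ — the graph with $i\sim j$ whenever $\omega_i\cap\omega_j\ne\emptyset$ — is connected: otherwise the unions of the $\omega_i$ over one connected component of the nerve and over its complement would split $\Omega$ into two disjoint nonempty open sets. In particular any two indices are joined by a chain of length at most $M$.

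Write $a_i=\tfrac{1}{|\omega_i|}\int_{\omega_i}u\,dz$ for the average of $u$ on $\omega_i$. Jensen's inequality gives $\int_{\omega_i}|u(x)-a_i|^p\,dx\le\tfrac{1}{|\omega_i|}\int_{\omega_i}\int_{\omega_i}|u(x)-u(z)|^p\,dz\,dx$, and since $|x-z|<2\rho$ on $\omega_i\times\omega_i$ and this set sits inside the domain of $\mathcal{E}$, the last integral is at most $(2\rho)^{N+sp}\mathcal{E}$; thus $\int_{\omega_i}|u(x)-a_i|^p\,dx\le(2\rho)^{N+sp}|\omega_i|^{-1}\mathcal{E}$. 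The same computation on $\omega_i\cap\omega_j$ (routing $a_i$ and $a_j$ through the average over the overlap) bounds $|a_i-a_j|^p$ by a constant multiple of $\mathcal{E}$ for every edge $i\sim j$, and telescoping along a chain of length $\le M$ then gives $|a_i-a_j|^p\le C_1\mathcal{E}$ for all $i,j$, with $C_1=C_1(N,s,p,\delta,\Omega)$. Finally, for $x\in\omega_i$ and $y\in\omega_j$,
\[
|u(x)-u(y)|^p\le 3^{p-1}\big(|u(x)-a_i|^p+|a_i-a_j|^p+|a_j-u(y)|^p\big);
\]
integrating over $\omega_i\times\omega_j$, summing over the finitely many pairs $(i,j)$, and using $\mathbf 1_{\Omega\times\Omega}(x,y)\le\sum_{i,j}\mathbf 1_{\omega_i}(x)\mathbf 1_{\omega_j}(y)$ yields the oscillation estimate, and with it the proposition.

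The one step that is not entirely routine is the choice of covering: one needs the nerve to be connected, which is exactly where the hypothesis that $\Omega$ is a domain enters, and one needs the measures of the balls and of their pairwise overlaps to be bounded below by constants independent of $u$ — the Lipschitz regularity of $\partial\Omega$ (through, e.g., an interior corkscrew/measure-density property) is a convenient way to see this, although for a single fixed finite cover it holds trivially. Everything else is Jensen's inequality, the elementary bound $|a+b+c|^p\le 3^{p-1}(|a|^p+|b|^p+|c|^p)$, and bookkeeping of constants that depend only on $N$, $s$, $p$, $\delta$ and $\Omega$.
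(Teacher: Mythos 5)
Your proof is correct and follows the standard chaining/covering route for comparing a truncated Gagliardo seminorm to the full one: split near and far interactions, reduce to bounding the $L^p$ oscillation of $u$ over $\Omega$ by the truncated seminorm, then do so via local averages over a finite cover by balls of radius less than $\delta/2$ whose nerve is connected (here is precisely where the connectedness of the domain $\Omega$ is used), with Jensen on each piece and telescoping of averages along chains in the nerve. The paper does not prove this proposition itself but imports it verbatim from \cite{BellCor}, and the argument there is essentially the one you give, so there is no genuine divergence in approach to report.
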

Because of Proposition \ref{belcor}, the space $\mathcal{X}^{\delta,s,p}(\Omega_{\delta})$ is isomorphic to the (classical) fractional Sobolev space
\begin{equation*}
W^{s,p}(\Omega_{\delta})=\{u\in L^p(\Omega_{\delta}):\ [u]_{W^{s,p}(\Omega_{\delta})}<+\infty\},
\end{equation*}
where 
\begin{equation*}
[u]_{W^{s,p}(\Omega_{\delta})}^p=\int_{\Omega_{\delta}}\int_{\Omega_{\delta}}\frac{|u(x)-u(y)|^p}{|x-y|^{N+sp}}dydx.
\end{equation*}
To take into account the homogeneous Dirichlet boundary condition on $\partial_{\delta}\Omega$, we consider the subspace
\begin{equation*}
\mathcal{X}_0^{\delta,s,p}(\Omega)\vcentcolon=\left\{u\in \mathcal{X}^{\delta,s,p}(\Omega_\delta):\ u=0\text{ in }\partial_{\delta}\Omega\right\}.
\end{equation*}
\begin{lemma}\cite[Lemma 6.2]{BellCor}\label{poinc}
Let $\Omega\subset\mathbb{R}^N$ a bounded Lipschitz domain and let $\Omega_{D}$ a measurable set of $\Omega$ of positive measure. Let $s\in(0,1)$ and $1\leq p<\infty$. Then, there exists $C>0$ such that for all $u\in W^{s,p}(\Omega)$ with $u=0$ a.e. on $\Omega_{D}$ we have
\begin{equation*}
\|u\|_{L^p(\Omega)}\leq C\ |u|_{W^{s,p}(\Omega)}=C\left(\int_{\Omega}\int_{\Omega}\frac{|u(x)-u(y)|^p}{|x-y|^{N+sp}}dydx\right)^{\frac{1}{p}}.
\end{equation*}
\end{lemma}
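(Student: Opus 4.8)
The plan is to prove this fractional Poincaré inequality by the standard compactness-and-contradiction argument, leaning on the compact embedding $W^{s,p}(\Omega)\hookrightarrow\hookrightarrow L^p(\Omega)$ available on bounded Lipschitz domains. Suppose the inequality fails; then for every $k\in\mathbb{N}$ there is $u_k\in W^{s,p}(\Omega)$ with $u_k=0$ a.e. on $\Omega_D$ and $\|u_k\|_{L^p(\Omega)}>k\,|u_k|_{W^{s,p}(\Omega)}$. Necessarily $u_k\not\equiv 0$ (otherwise this would read $0>0$), so we may normalize: set $v_k\vcentcolon= u_k/\|u_k\|_{L^p(\Omega)}$, which satisfies $\|v_k\|_{L^p(\Omega)}=1$, $v_k=0$ a.e. on $\Omega_D$, and $|v_k|_{W^{s,p}(\Omega)}<1/k$. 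In particular $(v_k)_k$ is bounded in $W^{s,p}(\Omega)$.

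By the compact embedding $W^{s,p}(\Omega)\hookrightarrow\hookrightarrow L^p(\Omega)$, valid on bounded Lipschitz domains, cf. \cite{DiNezzaPalaValdi}, up to a subsequence $v_k\to v$ strongly in $L^p(\Omega)$, and up to a further subsequence $v_k\to v$ a.e. in $\Omega$, for some $v\in L^p(\Omega)$ with $\|v\|_{L^p(\Omega)}=1$; passing to the a.e. limit also gives $v=0$ a.e. on $\Omega_D$. Since $(x,y)\mapsto|v_k(x)-v_k(y)|^p/|x-y|^{N+sp}$ is nonnegative and converges a.e. on $\Omega\times\Omega$ to $(x,y)\mapsto|v(x)-v(y)|^p/|x-y|^{N+sp}$, Fatou's lemma yields
\[
|v|_{W^{s,p}(\Omega)}^p\le\liminf_{k\to\infty}|v_k|_{W^{s,p}(\Omega)}^p=0.
\]
Hence $|v(x)-v(y)|=0$ for a.e. $(x,y)\in\Omega\times\Omega$, so $v$ equals a.e. a constant $c$ on $\Omega$; because $v=0$ on $\Omega_D$, a set of positive measure, we get $c=0$, i.e. $v\equiv 0$. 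This contradicts $\|v\|_{L^p(\Omega)}=1$ and proves the lemma, with a constant $C$ depending on $N,s,p,\Omega$ and $|\Omega_D|$ but not explicit from this argument.

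The hard part is really the compactness input: one needs that a bounded Lipschitz domain is a $W^{s,p}$-extension domain, so that the Rellich–Kondrachov-type compact embedding into $L^p(\Omega)$ holds — this is exactly where the regularity hypothesis on $\partial\Omega$ enters — together with the elementary rigidity fact that a vanishing Gagliardo seminorm forces a function to be a.e. constant. The remaining ingredients (normalization, lower semicontinuity of the seminorm via Fatou) are routine. If one instead wanted a non-compactness, more quantitative route, I would first establish the fractional Poincaré–Wirtinger inequality $\|u-\langle u\rangle_\Omega\|_{L^p(\Omega)}\le C\,|u|_{W^{s,p}(\Omega)}$ for the mean value $\langle u\rangle_\Omega$ of $u$ over $\Omega$, and then bound $|\langle u\rangle_\Omega|$ in terms of $|u|_{W^{s,p}(\Omega)}$ using $u=0$ on $\Omega_D$; but since only the existence of $C$ is needed here, the contradiction argument is the most economical.
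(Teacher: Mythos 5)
Your compactness-and-contradiction argument is correct: the compact embedding $W^{s,p}(\Omega)\hookrightarrow\hookrightarrow L^p(\Omega)$ holds on bounded Lipschitz (extension) domains, Fatou's lemma along the a.e.\ convergent subsequence correctly passes the Gagliardo seminorm to the limit, and the rigidity step (a vanishing seminorm over $\Omega\times\Omega$ forces $v$ to be a.e.\ constant, which must be $0$ since $|\Omega_D|>0$) is sound. That said, it is heavier machinery than the lemma needs, and it gives no control on the constant. There is a short direct proof which makes $C$ explicit and in fact requires no regularity of $\partial\Omega$ whatsoever, only boundedness. Write $D=\mathrm{diam}(\Omega)$. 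Since $u=0$ a.e.\ on $\Omega_D$, for a.e.\ $x\in\Omega$ and a.e.\ $y\in\Omega_D$ one has $|u(x)|=|u(x)-u(y)|$ and $|x-y|\le D$, hence
\begin{equation*}
|u(x)|^p\,\frac{|\Omega_D|}{D^{N+sp}}\le |u(x)|^p\int_{\Omega_D}\frac{dy}{|x-y|^{N+sp}}=\int_{\Omega_D}\frac{|u(x)-u(y)|^p}{|x-y|^{N+sp}}\,dy\le\int_{\Omega}\frac{|u(x)-u(y)|^p}{|x-y|^{N+sp}}\,dy,
\end{equation*}
and integrating over $x\in\Omega$ gives
\begin{equation*}
\|u\|_{L^p(\Omega)}^p\le\frac{D^{N+sp}}{|\Omega_D|}\,|u|_{W^{s,p}(\Omega)}^p,
\end{equation*}
i.e.\ the claim with $C=\bigl(D^{N+sp}/|\Omega_D|\bigr)^{1/p}$. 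Your route buys robustness, since it transfers to settings where no such pointwise trick is available once a compactness theorem is in hand; the direct estimate buys an explicit constant depending only on $N,s,p,\mathrm{diam}(\Omega),|\Omega_D|$, dispenses with the extension-domain hypothesis, and is valid for any bounded open $\Omega$, not just Lipschitz ones.
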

By combining Lemma \ref{poinc} and Proposition \ref{belcor} it follows that the semi-norm $[\cdot]_{W^{\delta,s,p}(\Omega_{\delta})}$ is actually a norm on $\mathcal{X}_0^{\delta,s,p}(\Omega)$.\newline 
On the other hand, given a function $v\in \mathcal{X}_0^{\delta,s,p}(\Omega)$, although we have $v=0$ on $\partial_{\delta}\Omega=\Omega_{\delta}\backslash\Omega$, the norms $[v]_{W^{\delta,s,p}(\Omega_{\delta})}$ and
\begin{equation*}
[v]_{W^{\delta,s,p}(\Omega)}^p=\int_{\Omega}\int_{\Omega\cap B(x,\delta)}\frac{|v(x)-v(y)|^p}{|x-y|^{N+sp}}dydx.
\end{equation*}
are not the same. Indeed, if $v=0$ on $\partial_{\delta}\Omega$, denoting by 
\begin{equation*}
\begin{split}
\mathcal{D}_{\delta}\vcentcolon=& \left\{(x,y):\ x\in\Omega_{\delta}\wedge y\in\Omega_{\delta}\cap B(x,\delta)\quad \text{except}\quad x\in \partial_{\delta}\Omega \wedge y\in\partial_{\delta}\Omega\cap B(x,\delta)\right\}\\
=&\Big(\Omega_{\delta}\times\big(\Omega_{\delta}\cap B(x,\delta)\big)\Big)\Big\backslash\Big(\partial_{\delta}\Omega\times\big(\partial_{\delta}\Omega\cap B(x,\delta)\big)\Big),
\end{split}
\end{equation*}
we have  
\begin{equation}\label{normXcero}
[v]_{W^{\delta,s,p}(\Omega_{\delta})}^p=\iint\limits_{\mathcal{D}_{\delta}}\frac{|v(x)-v(y)|^p}{|x-y|^{N+ps}}dydx,
\end{equation}
and the strict inclusion $\big(\Omega\times(\Omega\cap B(x,\delta))\big)\subsetneq\mathcal{D}_{\delta}$. Hence, the norm $[v]_{W^{\delta,s,p}(\Omega_{\delta})}$ takes into account the interaction between $\Omega$ and $\partial_{\delta}\Omega$ in the sense that 
\begin{equation*}
\begin{split}
[v]_{W^{\delta,s,p}(\Omega_{\delta})}^p=&\iint\limits_{\mathcal{D}_{\delta}}\frac{|v(x)-v(y)|^p}{|x-y|^{N+2s}}dydx\\
=&\int_{\Omega}\int_{\Omega\cap B(x,\delta)}\frac{|v(x)-v(y)|^p}{|x-y|^{N+2s}}dydx\\
&+\int_{\partial_{\delta}\Omega}\int_{\Omega\cap B(x,\delta)}\frac{|v(y)|^p}{|x-y|^{N+2s}}dydx+\int_{\Omega}\int_{\partial_{\delta}\Omega\cap B(x,\delta)}\frac{|v(x)|^p}{|x-y|^{N+2s}}dydx.
\end{split}
\end{equation*}
Therefore, the space $\mathcal{X}_0^{\delta,s,p}(\Omega)$ is the appropriate space to deal with homogeneous elliptic boundary value problems involving the operator $(-\Delta_p)_{\delta}^s$.\newline 
Some comments are in order. Comparing the norms $[v]_{W^{s,p}(\mathbb{R}^N)}$ and $[v]_{W^{\delta,s,p}(\Omega_{\delta})}$ we observe that $\partial_{\delta}\Omega$ plays the role of $\Omega^c$. Indeed, the sets $\Omega_{\delta}$ and $\Omega_{\delta}\cap B(x,\delta)$ will lead to the complete space $\mathbb{R}^N$ in the limit $\delta\to+\infty$, the set $\Omega\cap B(x,\delta)$ will eventually reach the set $\Omega$ for $\delta>0$ big enough and the sets $\partial_{\delta}\Omega$ and $\partial_{\delta}\Omega\cap B(x,\delta)$ will reach $\Omega^c$ in the limit $\delta\to+\infty$. Moreover,
\begin{equation}\label{contenido}
\mathcal{D}_{\delta_1}\subset\mathcal{D}_{\delta_2}\quad\text{ for }\delta_1<\delta_2,
\end{equation}
and $\displaystyle \mathcal{D}_{\delta}\to\mathcal{D}$ as $\delta\to+\infty$ with $\mathcal{D}$ given in \eqref{Gagliseminorm}. Besides that, to study convergence phenomena when one takes $\delta\to+\infty$, it will be essential to study the relation between the usual fractional space $\mathcal{X}_0^{\infty,s,p}(\Omega)$ and the space $\mathcal{X}_0^{\delta,s,p}(\Omega)$. To that end, in Section \ref{horizoninfty} we will prove the following.
\begin{lemma}\label{isomorfia} For any $\delta>0$, the spaces $\mathcal{X}_0^{\infty,s,p}(\Omega)$ and $\mathcal{X}_0^{\delta,s,p}(\Omega)$ are isomorphic. In particular, there exists a constant $C=C(\delta)>1$ such that 
\begin{equation*}
[\cdot]_{W^{\delta,s,p}(\Omega_{\delta})}\leq[\cdot]_{W^{s,p}(\mathbb{R}^N)}\leq C(\delta) [\cdot]_{W^{\delta,s,p}(\Omega_{\delta})} \quad\text{for all }\delta>0.
\end{equation*}
Moreover, $C(\delta)\to 1$ as $\delta \to +\infty$. 
\end{lemma}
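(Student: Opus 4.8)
The plan is to exhibit the isomorphism explicitly: the map $E_\delta$ sending $v\in\mathcal{X}_0^{\delta,s,p}(\Omega)$ to its extension by zero $\bar v$ on $\mathbb{R}^N$ (legitimate because $v$ already vanishes on $\partial_\delta\Omega$, so $\bar v=0$ on $\Omega^c$), with inverse the restriction $u\mapsto u|_{\Omega_\delta}$, will be shown to be bi-Lipschitz with respect to the two seminorms, which are genuine norms by the remarks preceding the statement. The starting point is the elementary set identity $\mathcal{D}_\delta=\mathcal{D}\cap\{(x,y):|x-y|<\delta\}$, where $\mathcal{D}$ is as in \eqref{Gagliseminorm}: every pair of $\mathcal{D}_\delta$ has at least one coordinate in $\Omega$ (since $\Omega_\delta=\Omega\cup\partial_\delta\Omega$ and the two coordinates are not both in $\partial_\delta\Omega$), hence lies in $\mathcal{D}$; conversely a pair of $\mathcal{D}$ with $|x-y|<\delta$ has, after renaming, $x\in\Omega\subset\Omega_\delta$ and then $y\in\Omega_\delta\cap B(x,\delta)$ whether $y\in\Omega$ or $y\in\Omega^c$ (in the latter case $y\in\partial_\delta\Omega$ by definition). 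Since by \eqref{normXcero} and \eqref{Gagliseminorm} both seminorms of $\bar v$ are integrals of the same nonnegative kernel over $\mathcal{D}_\delta$ and over $\mathcal{D}$ respectively, the inclusion $\mathcal{D}_\delta\subseteq\mathcal{D}$ gives at once the left inequality and the identity
\begin{equation*}
[\bar v]_{W^{s,p}(\mathbb{R}^N)}^p-[v]_{W^{\delta,s,p}(\Omega_\delta)}^p=\iint_{\mathcal{D}\cap\{|x-y|\ge\delta\}}\frac{|\bar v(x)-\bar v(y)|^p}{|x-y|^{N+ps}}\,dy\,dx.
\end{equation*}

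For the right inequality I would decompose $[\bar v]_{W^{s,p}(\mathbb{R}^N)}^p=A+2B$, with $A=\int_\Omega\!\int_\Omega|v(x)-v(y)|^p|x-y|^{-N-ps}\,dy\,dx$ and $B=\int_\Omega\!\int_{\Omega^c}|v(x)|^p|x-y|^{-N-ps}\,dy\,dx$ (using $\bar v\equiv 0$ on $\Omega^c$ and symmetry). Proposition \ref{belcor} bounds $A$ by a constant depending on $\delta$ times $\int_\Omega\!\int_{\Omega\cap B(x,\delta)}|v(x)-v(y)|^p|x-y|^{-N-ps}\,dy\,dx\le[v]_{W^{\delta,s,p}(\Omega_\delta)}^p$. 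For $B$, split the inner integral at $|x-y|=\delta$: the near contribution equals $\int_\Omega\!\int_{\partial_\delta\Omega\cap B(x,\delta)}|v(x)|^p|x-y|^{-N-ps}\,dy\,dx$ because $\Omega^c\cap B(x,\delta)=\partial_\delta\Omega\cap B(x,\delta)$ for $x\in\Omega$, and this is precisely one of the three nonnegative terms in the decomposition of $[v]_{W^{\delta,s,p}(\Omega_\delta)}^p$ recorded in the text, hence $\le[v]_{W^{\delta,s,p}(\Omega_\delta)}^p$; the far contribution is controlled crudely by the radial integral $\int_{|z|\ge\delta}|z|^{-N-ps}\,dz=\frac{|\mathbb{S}^{N-1}|}{ps}\delta^{-ps}$ times $\|v\|_{L^p(\Omega)}^p$. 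Since $\|v\|_{L^p(\Omega)}^p\le C_P(\delta)[v]_{W^{\delta,s,p}(\Omega_\delta)}^p$ by the Poincaré inequality on $\mathcal{X}_0^{\delta,s,p}(\Omega)$ (Lemma \ref{poinc} together with Proposition \ref{belcor}, as noted above), collecting the three bounds yields $[\bar v]_{W^{s,p}(\mathbb{R}^N)}\le C(\delta)[v]_{W^{\delta,s,p}(\Omega_\delta)}$ with $C(\delta)>1$ finite for every $\delta>0$; together with the left inequality this proves $E_\delta$ is an isomorphism for each $\delta>0$.

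To obtain $C(\delta)\to1$ as $\delta\to+\infty$ one must be more careful, since the constant just produced carries an additive term $\ge 3$. The improvement rests on two observations. First, once $\delta>\operatorname{diam}(\Omega)$ one has $\Omega\cap B(x,\delta)=\Omega$ for all $x\in\Omega$, so Proposition \ref{belcor} is invoked with constant $1$, and moreover on $\mathcal{D}\cap\{|x-y|\ge\delta\}$ the two coordinates cannot both lie in $\Omega$; the displayed identity then reduces to the exact equality
\begin{equation*}
[\bar v]_{W^{s,p}(\mathbb{R}^N)}^p=[v]_{W^{\delta,s,p}(\Omega_\delta)}^p+2\int_\Omega|v(x)|^p\Big(\int_{\Omega^c\setminus B(x,\delta)}\frac{dy}{|x-y|^{N+ps}}\Big)dx .
\end{equation*}
Second, to keep the tail term uniformly small I would fix one reference horizon $\delta_0>0$ and use the monotonicity $\mathcal{D}_{\delta_0}\subseteq\mathcal{D}_\delta$ of \eqref{contenido}: restricting $v$ to $\Omega_{\delta_0}$ (on which it still vanishes on $\partial_{\delta_0}\Omega\subseteq\partial_\delta\Omega$) gives $[v]_{W^{\delta_0,s,p}(\Omega_{\delta_0})}\le[v]_{W^{\delta,s,p}(\Omega_\delta)}$, so the \emph{fixed} Poincaré constant $C_P(\delta_0)$ controls $\|v\|_{L^p(\Omega)}^p$ for every $\delta\ge\delta_0$. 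Combining, $[\bar v]_{W^{s,p}(\mathbb{R}^N)}^p\le\bigl(1+2|\mathbb{S}^{N-1}|C_P(\delta_0)(ps)^{-1}\delta^{-ps}\bigr)[v]_{W^{\delta,s,p}(\Omega_\delta)}^p$, which furnishes an admissible $C(\delta)\downarrow1$. The only genuinely delicate point in the whole argument is this last uniformity: one must prevent the $\delta$-dependent Poincaré constant from re-entering the tail estimate, and freezing it at a reference horizon while invoking the monotonicity of the sets $\mathcal{D}_\delta$ is exactly what makes it work.
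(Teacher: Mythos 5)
Your argument is correct, and the overall strategy matches the paper's (left inequality from $\mathcal{D}_\delta\subset\mathcal{D}$, right inequality from decomposing the global Gagliardo seminorm and absorbing the tail via a Poincar\'e-type bound, then $C(\delta)\to1$ by keeping that bound $\delta$-uniform), but the details differ in a way worth noting. The paper decomposes $[v]_{W^{s,p}(\mathbb{R}^N)}^p$ as $\int_{\Omega_\delta}\int_{\Omega_\delta\cap B(x,\delta)}+\int_{\Omega_\delta}\int_{\Omega_\delta\setminus B(x,\delta)}+2\int_{\Omega_\delta}\int_{\Omega_\delta^c}$; the first piece is \emph{exactly} $[v]_{W^{\delta,s,p}(\Omega_\delta)}^p$ with coefficient $1$, while the two remainders are each bounded by $\|v\|_{L^p(\Omega)}^p$ times an explicit $O(\delta^{-ps})$ factor, so a single decomposition already yields $C(\delta)^p=1+O(\delta^{-ps})/\lambda_1^{\delta,s,p}$ for \emph{all} $\delta>0$. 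The paper then closes by observing that $\lambda_1^{\delta,s,p}$ is increasing in $\delta$ (from \eqref{contenido}), hence bounded below away from zero, which makes the $O$-term vanish. You instead split $\Omega\times\Omega$ plus cross-terms, and for general $\delta$ you must invoke Proposition~\ref{belcor} on the $\Omega\times\Omega$ block, which costs you a multiplicative constant and forces a second pass for $\delta>\operatorname{diam}(\Omega)$; your fix (noting that Proposition~\ref{belcor} is trivial once $\delta>\operatorname{diam}(\Omega)$, that $\mathcal{D}\cap\{|x-y|\ge\delta\}$ then misses $\Omega\times\Omega$, and freezing the Poincar\'e constant at a reference horizon $\delta_0$ via $[v]_{W^{\delta_0,s,p}(\Omega_{\delta_0})}\le[v]_{W^{\delta,s,p}(\Omega_\delta)}$) is correct and plays exactly the same role as the paper's monotonicity of $\lambda_1^{\delta,s,p}$. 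Your clean identity $\mathcal{D}_\delta=\mathcal{D}\cap\{|x-y|<\delta\}$ is in fact a more transparent way to see \eqref{normcomparison3} than the paper's set-difference definition, and you correctly diagnose the only real subtlety (keeping the Poincar\'e constant from re-entering the tail with a $\delta$-dependence). So: same proof in spirit, somewhat less economical in the middle step, but you spotted and repaired the inefficiency yourself.
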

\vspace{0.2cm}
\begin{definition}
We say that $\lambda >0$ is an eigenvalue to problem \eqref{eigenproblem} if there exists $\varphi\in\mathcal{X}_0^{\delta,s,p}(\Omega)$ with $\varphi\ne 0$, such that 
\begin{equation*}
\int_{\Omega_{\delta}}\int_{\Omega_{\delta}\cap B(x,\delta)}\mkern-10mu\frac{|\varphi(x)-\varphi(y)|^{p-2}(\varphi(x)-\varphi(y))(v(x)-v(y))}{|x-y|^{N+ps}}dydx=\lambda\int_{\Omega}|\varphi|^{p-2}\varphi v\,dx,
\end{equation*}
for all $v\in\mathcal{X}_0^{\delta,s,p}(\Omega)$. In this case, we say that $\varphi$ is an eigenfunction of $(-\Delta_p)_{\delta}^s$ corresponding to the eigenvalue $\lambda>0$.
\end{definition}
Following the construction of the variational eigenvalues of \eqref{eigenproblem0} and \eqref{eigenproblem_inf}, the eigenvalues of the operator $(-\Delta_p)_{\delta}^s$ are defined as 
\begin{equation}\label{minimax}
\lambda_{k}^{\delta,s,p}(\Omega)\vcentcolon=\inf\limits_{A\in\mathcal{H}_k^{\delta,s,p}}\max\limits_{v\in A}\,[v]_{W^{\delta,s,p}(\Omega_{\delta})}^p,
\end{equation}
where
\begin{equation*}
\mathcal{H}_k^{\delta,s,p}\vcentcolon=\left\{A\subset\mathcal{S}^{\delta,s,p}(\Omega): A\text{ symmetric and compact, }i(A)\geq k\right\}
\end{equation*}
with
\begin{equation*}
\mathcal{S}^{\delta,s,p}(\Omega)\vcentcolon=\{u\in\mathcal{X}_0^{\delta,s,p}(\Omega): \|u\|_{L^p(\Omega)}=1\}.
\end{equation*}
Once we have introduced the appropriate functional setting, we present the main results of this work. 
\begin{theorem}\label{theigen} 
Let $\Omega\subset\mathbb{R}^N$ be and open bounded set with Lipschitz boundary. Then, for any $p\in(1,+\infty)$ and $k\in\mathbb{N}$, 
\begin{equation*}
\frac{p(1-s)}{\delta^{p(1-s)}}\lambda_{k}^{\delta,s,p}\to\gamma(N,p)\lambda_{k}^{0,1,p}\quad \text{as}\ \delta\to0^+
\end{equation*}
where 
\begin{equation}\label{constante}
\gamma(N,p)=\int_{\mathbb{S}^{N-1}}| e\cdot z |^pd\sigma(z),\quad e\in\mathbb{S}^{N-1}.
\end{equation}
Moreover, if $\varphi_{k}^{\delta,s,p}$ is an eigenfunction associated to the variational eigenvalue $\lambda_{k}^{\delta,s,p}$ with $\|\varphi_{k}^{\delta,s,p}\|_{L^p(\Omega)}=1$, there exists a subsequence $\{\varphi_{k}^{\delta_j,s,p}\}_{j\in\mathbb{N}}\subset\{\varphi_{k}^{\delta,s,p}\}_{\delta>0}$, with $\delta_j\to0$ as $j\to+\infty$, such that  
\begin{equation*}
\varphi_{k}^{\delta_j,s,p}\to\varphi_{k}^{0,1,p}\ \text{in}\ L^p(\Omega)\quad \text{as}\ j\to +\infty.
\end{equation*}
\end{theorem}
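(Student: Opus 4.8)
The plan is to follow the $\Gamma$-convergence strategy outlined in the introduction. First I would introduce, for each $\delta>0$, the rescaled energy functional $\mathcal{F}_\delta(u):=\tfrac{p(1-s)}{\delta^{p(1-s)}}[u]_{W^{\delta,s,p}(\Omega_\delta)}^p$ on the space $\mathcal{X}_0^{\delta,s,p}(\Omega)$, extended by $+\infty$ to the ambient space $L^p(\Omega)$ (identifying functions with their zero extensions), and the limit functional $\mathcal{F}_0(u):=\gamma(N,p)\|\nabla u\|_{L^p(\Omega)}^p$ for $u\in W_0^{1,p}(\Omega)$ and $+\infty$ otherwise. The first key step is to invoke the $\Gamma$-convergence result of \cite{BellCor} (the nonlocal-to-local result of Bellido--Cueto--Pedregal referenced in the introduction) to conclude that $\mathcal{F}_\delta \xrightarrow{\Gamma} \mathcal{F}_0$ in the strong $L^p(\Omega)$ topology as $\delta\to0^+$; the scaling constant $p(1-s)/\delta^{p(1-s)}$ is precisely the one that produces the constant $\gamma(N,p)$ in \eqref{constante} in the limit, analogous to \eqref{squa}--\eqref{squconst}. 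I would check the compactness hypothesis needed for the genus-minimax machinery: any sequence $\{u_\delta\}$ with $\mathcal{F}_\delta(u_\delta)$ bounded and $\|u_\delta\|_{L^p(\Omega)}=1$ is precompact in $L^p(\Omega)$ — this follows from the uniform bound on a fractional seminorm (via Proposition~\ref{belcor} and the fractional Rellich theorem on $\Omega_\delta$, together with the fact that the rescaling keeps the $W^{s,p}$-seminorm controlled for $\delta$ bounded, or more directly from the liminf inequality of $\Gamma$-convergence forcing the limit into $W_0^{1,p}(\Omega)$).

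The second key step is to transfer the $\Gamma$-convergence of the functionals to convergence of the minimax values \eqref{minimax}. For this I would apply the abstract result of \cite{DegioMarco}: if a sequence of even, lower semicontinuous functionals $\Gamma$-converges, is uniformly coercive on the unit $L^p$-sphere, and satisfies a Palais--Smale-type compactness condition, then for each $k\in\mathbb{N}$ the $k$-th genus-minimax value converges to the $k$-th genus-minimax value of the limit functional. The sphere constraint $\mathcal{S}^{\delta,s,p}(\Omega)$ converges, in the appropriate sense, to $\mathcal{S}^{1,p}(\Omega)$ because the $L^p(\Omega)$ norm is continuous and the domains of the functionals behave well; I would need to verify that the admissible classes $\mathcal{H}_k^{\delta,s,p}$ and $\mathcal{H}_k^{1,p}$ are compatible with this passage to the limit (this is exactly what the Degiovanni--Marco framework, as used in \cite{BrascoPariniSquassina}, handles). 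Combining the two steps yields
\begin{equation*}
\frac{p(1-s)}{\delta^{p(1-s)}}\lambda_{k}^{\delta,s,p}=\inf_{A\in\mathcal{H}_k^{\delta,s,p}}\max_{v\in A}\mathcal{F}_\delta(v)\longrightarrow \inf_{A\in\mathcal{H}_k^{1,p}}\max_{u\in A}\mathcal{F}_0(u)=\gamma(N,p)\lambda_{k}^{0,1,p}
\end{equation*}
as $\delta\to0^+$, which is the eigenvalue statement.

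For the eigenfunction convergence, I would fix $k$ and a choice of normalized eigenfunction $\varphi_k^{\delta,s,p}$ for each $\delta$. From the eigenvalue equation and the convergence of $\lambda_k^{\delta,s,p}$ just established, the rescaled energies $\mathcal{F}_\delta(\varphi_k^{\delta,s,p})=\tfrac{p(1-s)}{\delta^{p(1-s)}}\lambda_k^{\delta,s,p}$ stay bounded as $\delta\to0^+$. By the compactness property noted above, a subsequence $\varphi_k^{\delta_j,s,p}$ converges strongly in $L^p(\Omega)$ to some $\varphi\in W_0^{1,p}(\Omega)$ with $\|\varphi\|_{L^p(\Omega)}=1$. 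The $\Gamma$-liminf inequality gives $\mathcal{F}_0(\varphi)\le \liminf_j \mathcal{F}_{\delta_j}(\varphi_k^{\delta_j,s,p})=\gamma(N,p)\lambda_k^{0,1,p}$. Passing to the limit in the weak (variational) formulation of the eigenvalue problem — using that $\mathcal{F}_\delta$ also $\Gamma$-converges together with the Mosco-type convergence of the associated subdifferentials, exactly as in \cite{BrascoPariniSquassina} — shows $\varphi$ is an eigenfunction of $(EP_0^{1,p})$ associated to an eigenvalue $\le \gamma(N,p)\lambda_k^{0,1,p}$ in terms of $\mathcal{F}_0$-energy, i.e.\ $\|\nabla\varphi\|_{L^p(\Omega)}^p\le \lambda_k^{0,1,p}$; moreover by the genus-monotonicity and the convergence of minimax values one gets that $\varphi$ realizes the $k$-th minimax level, so it is an eigenfunction $\varphi_k^{0,1,p}$ associated to $\lambda_k^{0,1,p}$.

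\textbf{Main obstacle.} I expect the delicate point to be the verification of the Palais--Smale / compactness condition required by the \cite{DegioMarco} minimax-stability theorem uniformly in $\delta$, and the careful bookkeeping needed to make the ambient-space identification work: the functionals $\mathcal{F}_\delta$ live a priori on different spaces $\mathcal{X}_0^{\delta,s,p}(\Omega)$ of functions defined on the $\delta$-dependent collars $\Omega_\delta$, so one must set things up on a common space (functions on $\mathbb{R}^N$ or on $\Omega$ extended by zero) and check that the zero Dirichlet condition on $\partial_\delta\Omega$ is compatible with the limiting Dirichlet condition on $\partial\Omega$ — the collar $\partial_\delta\Omega$ shrinks to $\partial\Omega$ as $\delta\to0^+$, so this is morally clear but needs the estimates from Proposition~\ref{belcor} and Lemma~\ref{poinc} to be made uniform, or at least controlled, as $\delta\to 0^+$. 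The identification of the limit constant $\gamma(N,p)$ via the precise scaling is also something one has to track through the Bellido--Cueto--Pedregal $\Gamma$-limit computation rather than take for granted.
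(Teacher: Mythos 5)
Your proposal follows essentially the same route as the paper: $\Gamma$-convergence of the rescaled energy $\frac{p(1-s)}{\delta^{p(1-s)}}[\cdot]_{W^{\delta,s,p}(\Omega_\delta)}^p$ to $\gamma(N,p)\|\nabla\cdot\|_{L^p(\Omega)}^p$ via Theorem~\ref{belcorped} (the Bellido--Mora-Corral--Pedregal result, which you cite under the wrong key \cite{BellCor} but clearly intend), transfer to minimax values via Propositions~\ref{degiomarzo} and~\ref{degiomarzo1}, and a perturbed-functional argument \`{a}~la \cite{BrascoPariniSquassina} for the eigenfunctions. One caution: the compactness you need is \emph{not} obtainable from Proposition~\ref{belcor} plus fractional Rellich on $\Omega_\delta$ (the constant $C(\delta)$ there blows up as $\delta\to0^+$), nor is it a consequence of the $\Gamma$-liminf inequality, which only constrains a limit that is already assumed to exist; it is exactly part~(a) of Theorem~\ref{belcorped}, and this is the ingredient supplying the hypothesis of Proposition~\ref{degiomarzo} as well as the precompactness of the normalized eigenfunctions, so it should be invoked explicitly rather than hedged.
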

\begin{theorem}\label{theigen2}
Let $\Omega\subset\mathbb{R}^N$ be and open bounded set with Lipschitz boundary. Then, for any $p\in(1,+\infty)$, $s\in(0,1)$ and $k\in\mathbb{N}$, 
\begin{equation*}
\lambda_{k}^{\delta,s,p}\to\lambda_{k}^{\infty,s,p}\quad \text{as}\ \delta\to+\infty.
\end{equation*}
Moreover, if $\varphi_{k}^{\delta,s,p}$ is an eigenfunction associated to the variational eigenvalue $\lambda_{k}^{\delta,s,p}$ with $\|\varphi_{k}^{\delta,s,p}\|_{L^p(\Omega)}=1$, there exists a subsequence $\{\varphi_{k}^{\delta_j,s,p}\}_{j\in\mathbb{N}}\subset\{\varphi_{k}^{\delta,s,p}\}_{\delta>0}$, with $\delta_j\to+\infty$ as $j\to+\infty$, such that  
\begin{equation*}
\varphi_{k}^{\delta_j,s,p}\to\varphi_{k}^{\infty,s,p}\ \text{in}\ L^p(\Omega)\quad \text{as}\ j\to +\infty.
\end{equation*}
\end{theorem}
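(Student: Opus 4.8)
The plan is to run, for the horizon parameter $\delta$, the same $\Gamma$-convergence scheme that \cite{BrascoPariniSquassina} uses for the transition $s\to 1^-$. Extend the energies to all of $L^p(\Omega)$: for $\delta>0$ put $F_\delta(u)=[u]_{W^{\delta,s,p}(\Omega_\delta)}^p$ if $u\in\mathcal{X}_0^{\delta,s,p}(\Omega)$ (functions extended by $0$ outside $\Omega$) and $F_\delta(u)=+\infty$ otherwise, and $F_\infty(u)=[u]_{W^{s,p}(\mathbb{R}^N)}^p$ on $\mathcal{X}_0^{\infty,s,p}(\Omega)$, $+\infty$ otherwise. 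The first task is to prove $F_\delta\xrightarrow{\Gamma}F_\infty$ in $L^p(\Omega)$ as $\delta\to+\infty$, and here I would avoid a full $\Gamma$-limit computation and instead use monotonicity: by \eqref{normXcero}, \eqref{contenido} and $\mathcal{D}_\delta\uparrow\mathcal{D}$ the family $\{F_\delta\}$ is nondecreasing in $\delta$ with $\sup_{\delta>0}F_\delta=F_\infty$ pointwise (by monotone convergence, noting also $\mathcal{X}_0^{\infty,s,p}(\Omega)\subset\mathcal{X}_0^{\delta,s,p}(\Omega)$ for every $\delta$). Since $F_\infty$ is $L^p$-lower semicontinuous, the $\Gamma$-$\limsup$ inequality is realized by the constant sequence $u_\delta\equiv u$, and for the $\Gamma$-$\liminf$ inequality, given $u_\delta\to u$ in $L^p(\Omega)$ and $\delta\ge\delta_0$ one has $F_\delta(u_\delta)\ge[u_\delta]_{W^{\delta_0,s,p}(\Omega_{\delta_0})}^p$; lower semicontinuity of this fixed seminorm and then $\delta_0\to+\infty$ yield $\liminf_\delta F_\delta(u_\delta)\ge F_\infty(u)$.

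Next, equi-coercivity for $\delta\ge\delta_0$ follows from monotonicity, $\{F_\delta\le c\}\subset\{F_{\delta_0}\le c\}$, together with Lemma~\ref{isomorfia} and the compact embedding of $\mathcal{X}_0^{\infty,s,p}(\Omega)$ into $L^p(\Omega)$. Feeding the $\Gamma$-convergence and equi-coercivity into the abstract result on continuity of genus-type minimax values under $\Gamma$-convergence recalled in Section~\ref{preliminaryresults} (cf.~\cite{DegioMarco}) then gives $\lambda_k^{\delta,s,p}\to\lambda_k^{\infty,s,p}$ for every $k\in\mathbb{N}$. One should note that monotonicity already yields the upper bound $\lambda_k^{\delta,s,p}\le\lambda_k^{\infty,s,p}$ for free, since $F_\delta\le F_\infty$ and $\mathcal{H}_k^{s,p}\subset\mathcal{H}_k^{\delta,s,p}$; only the lower bound genuinely uses the $\Gamma$-convergence machinery, and the only real care needed here is checking that the hypotheses of the minimax-continuity theorem (lower semicontinuity, equi-coercivity, compatibility of the constraint manifolds $\mathcal{S}^{\delta,s,p}(\Omega)$) are met—which is why I phrase Step~2 through the extended functionals $F_\delta$.

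For the eigenfunctions, let $\varphi_\delta:=\varphi_k^{\delta,s,p}$ with $\|\varphi_\delta\|_{L^p(\Omega)}=1$. Testing the weak formulation with $\varphi_\delta$ gives $[\varphi_\delta]_{W^{\delta,s,p}(\Omega_\delta)}^p=\lambda_k^{\delta,s,p}$, hence $[\varphi_\delta]_{W^{s,p}(\mathbb{R}^N)}^p\le C(\delta)\lambda_k^{\delta,s,p}$ by Lemma~\ref{isomorfia}; since $\lambda_k^{\delta,s,p}\to\lambda_k^{\infty,s,p}$ and $C(\delta)\to1$, the family $\{\varphi_\delta\}$ is bounded in $\mathcal{X}_0^{\infty,s,p}(\Omega)$, so along some $\delta_j\to+\infty$ we get $\varphi_{\delta_j}\to\varphi$ in $L^p(\Omega)$ and a.e. (thus $\|\varphi\|_{L^p(\Omega)}=1$, so $\varphi\ne0$) and $\varphi_{\delta_j}\rightharpoonup\varphi$ in $\mathcal{X}_0^{\infty,s,p}(\Omega)$. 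To upgrade to strong convergence I would transfer the eigenfunction equation to the fixed space: for $v\in\mathcal{X}_0^{\infty,s,p}(\Omega)$, write the $\delta_j$-pairing of $(-\Delta_p)_{\delta_j}^s\varphi_{\delta_j}$ against $v$ as the corresponding pairing of $(-\Delta_p)_\infty^s\varphi_{\delta_j}$ minus an integral over $\mathcal{D}\setminus\mathcal{D}_{\delta_j}$, bounding the latter by Hölder and $\iint_{\mathcal{D}\setminus\mathcal{D}_{\delta_j}}|\varphi_{\delta_j}(x)-\varphi_{\delta_j}(y)|^p|x-y|^{-(N+ps)}\,dx\,dy=[\varphi_{\delta_j}]_{W^{s,p}(\mathbb{R}^N)}^p-[\varphi_{\delta_j}]_{W^{\delta_j,s,p}(\Omega_{\delta_j})}^p\le(C(\delta_j)-1)\lambda_k^{\delta_j,s,p}\to0$. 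Combined with the eigenvalue equation, the pairing of $(-\Delta_p)_\infty^s\varphi_{\delta_j}$ against any fixed $v\in\mathcal{X}_0^{\infty,s,p}(\Omega)$ equals $\lambda_k^{\delta_j,s,p}\int_\Omega|\varphi_{\delta_j}|^{p-2}\varphi_{\delta_j}v+o(1)$. Taking $v=\varphi_{\delta_j}-\varphi$ kills the right-hand side (boundedness of $\lambda_k^{\delta_j,s,p}$ and of $|\varphi_{\delta_j}|^{p-2}\varphi_{\delta_j}$ in $L^{p'}(\Omega)$, with $\varphi_{\delta_j}-\varphi\to0$ in $L^p(\Omega)$), while the pairing of $(-\Delta_p)_\infty^s\varphi$ against $\varphi_{\delta_j}-\varphi$ vanishes by weak convergence; the $(S_+)$ property of the fractional $p\,$-Laplacian (see e.g.~\cite{Iannizzotto2014}) then forces $\varphi_{\delta_j}\to\varphi$ strongly in $\mathcal{X}_0^{\infty,s,p}(\Omega)$. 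Strong convergence makes $|\varphi_{\delta_j}(x)-\varphi_{\delta_j}(y)|^{p-2}(\varphi_{\delta_j}(x)-\varphi_{\delta_j}(y))$ converge in $L^{p'}(\mathcal{D};|x-y|^{-(N+ps)}dx\,dy)$, so passing to the limit in the weak formulation (using once more the $\mathcal{D}\setminus\mathcal{D}_{\delta_j}$ estimate to discard the extra domain) shows $\varphi$ solves the weak form of \eqref{eigenproblem_inf} with $\lambda=\lambda_k^{\infty,s,p}$; moreover $[\varphi]_{W^{s,p}(\mathbb{R}^N)}^p=\lim_j[\varphi_{\delta_j}]_{W^{s,p}(\mathbb{R}^N)}^p=\lambda_k^{\infty,s,p}$ since $\lambda_k^{\delta_j,s,p}\le[\varphi_{\delta_j}]_{W^{s,p}(\mathbb{R}^N)}^p\le C(\delta_j)\lambda_k^{\delta_j,s,p}$. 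Hence $\varphi$ is an eigenfunction $\varphi_k^{\infty,s,p}$ and the proof is complete.

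I expect the strong-convergence step to be the main obstacle. The operators $(-\Delta_p)_\delta^s$ act through the moving domains $\mathcal{D}_\delta$, so the monotonicity/$(S_+)$ apparatus of the fixed operator $(-\Delta_p)_\infty^s$ is not directly applicable; one must pay for the transfer to the fixed space with the remainder over $\mathcal{D}\setminus\mathcal{D}_\delta$, and controlling that remainder by $(C(\delta)-1)\lambda_k^{\delta,s,p}$ relies crucially on the refinement $C(\delta)\to 1$ in Lemma~\ref{isomorfia}—mere local boundedness of $C(\delta)$ would not suffice. Everything else (the $\Gamma$-convergence, the equi-coercivity, and the convergence of the minimax levels) is comparatively routine once monotonicity in $\delta$ and Lemma~\ref{isomorfia} are in hand.
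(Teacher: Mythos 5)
Your proof is correct, and the eigenvalue part follows essentially the same line as the paper: $\Gamma$-convergence of the extended energies (obtained, as you observe, directly from monotonicity in $\delta$ together with lower semicontinuity), equicoercivity from Lemma~\ref{isomorfia} together with the compact embedding of $\mathcal{X}_0^{\infty,s,p}(\Omega)$ into $L^p(\Omega)$, and then the Degiovanni--Marzocchi continuity of genus-minimax values (Propositions~\ref{degiomarzo}--\ref{degiomarzo1}). This is precisely Lemma~\ref{Gconvergenceinf} plus the scheme of Theorem~\ref{theigen}, which is all the paper itself invokes.

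For the eigenfunction convergence, however, you take a genuinely different route. The paper, mirroring the proof of Theorem~\ref{theigen}, recasts $\varphi_k^{\delta,s,p}$ as the unique minimizer of the strictly convex functional $\mathcal{J}_{\delta,s,p}(u)+p\int_\Omega F_\delta\,u\,dx$, with $F_\delta=-\lambda_k^{\delta,s,p}|\varphi_k^{\delta,s,p}|^{p-2}\varphi_k^{\delta,s,p}$, shows $F_\delta\to F_\infty$ strongly in $L^{p'}(\Omega)$, and then uses stability of $\Gamma$-limits under continuously convergent linear perturbations (\cite[Prop.~6.25]{Maso}) together with convergence of minimizers to identify the $L^p$-limit as an eigenfunction of $(-\Delta_p)_\infty^s$. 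You instead pass directly to the limit in the Euler--Lagrange equation: you transfer the pairing from the moving domain $\mathcal{D}_{\delta_j}$ to the fixed domain $\mathcal{D}$, control the remainder over $\mathcal{D}\setminus\mathcal{D}_{\delta_j}$ using Lemma~\ref{isomorfia}, and close the argument with the $(S_+)$ property of $(-\Delta_p)_\infty^s$. Both routes are sound. The paper's is softer (purely variational, no operator monotonicity input); yours buys a stronger conclusion, namely strong convergence of $\varphi_k^{\delta_j,s,p}$ in $\mathcal{X}_0^{\infty,s,p}(\Omega)$, beyond the $L^p$-convergence asserted in the statement. One small bookkeeping correction: from Lemma~\ref{isomorfia} the remainder satisfies $\iint_{\mathcal{D}\setminus\mathcal{D}_{\delta_j}}|\varphi_{\delta_j}(x)-\varphi_{\delta_j}(y)|^p|x-y|^{-(N+ps)}\,dx\,dy\le\bigl(C(\delta_j)^p-1\bigr)\lambda_k^{\delta_j,s,p}$, i.e., with $C(\delta_j)^p-1$ rather than $C(\delta_j)-1$; this does not affect the conclusion since $C(\delta_j)\to1$.
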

{It is remarkable that both the spectral asymptotic behavior described in \cite{BrascoPariniSquassina} for the fractional $p\,$-Laplacian as $s\to1^-$ (see \eqref{squa} and \eqref{squconst}) and the spectral asymptotic behavior of $(-\Delta_p)_{\delta}^s$ as $\delta\to0^+$ provided by Theorem \ref{theigen} coincide even so the fractionallity parameter $s\in(0,1)$ is fixed in our setting. Indeed, the simple relation $\gamma(N,p)=pK(N,p)$ lead us to 
\begin{equation*}
\lim\limits_{\delta\to0^+}\frac{p(1-s)}{\delta^{p(1-s)}}\lambda_{n}^{\delta,s,p}=\gamma(N,p)\lambda_{n}^{0,1,p}=\lim\limits_{s\to1^-}p(1-s)\lambda_{n}^{\infty,s,p},\quad\text{for all }n\in\mathbb{N}.
\end{equation*}
Roughly speaking, the scaling factor $\frac{p(1-s)}{\delta^{p(1-s)}}$ reflects both the blow up rate and the concentration rate of the nonlocal-to-local transitions described by the limits $s\to1^-$ and $\delta\to0^+$ respectively (see also Remark \ref{remarkBBM}).} 


\section{Preliminary results: $\Gamma$-convergence}\label{preliminaryresults}
The proofs of Theorem \ref{theigen} and Theorem \ref{theigen2} rely on some results about $\Gamma$-convergence of the norm $[\cdot]_{W^{\delta,s,p}(\Omega_{\delta})}$ with respect to $\delta$. $\Gamma$-convergence is the right concept of convergence for the study of limit processes of variational problems since, together with equicoercivity or compactness of sequences of uniformly bounded energy, it implies the convergence of infimum values and minimizers, if those exist. Furthermore, as showed in \cite{DegioMarco}, $\Gamma$-convergence behaves nicely with minimax problems. We will exploit this fact in the proofs of Theorem \ref{theigen} and Theorem \ref{theigen2}. A nice account on properties and applications of $\Gamma$-convergence can be found in \cite{Braides, Maso}. Next, we introduce the key result to deal with the limit $\delta\to0^+$.
\vspace{0.2cm}

Given $\delta>0$, let us consider the sequence of functionals $\{I_{\delta}^{s,p}\}_{\delta>0}$ defined as\break $I_{\delta}^{s,p}:L^p(\Omega)\mapsto[0,+\infty]$ with
\begin{equation}\label{fundelta}
I_{\delta}^{s,p}(u)\vcentcolon=\left\{ 
\begin{array}{ll} 
\frac{(p(1-s))^{\frac1p}}{\delta^{(1-s)}}[u]_{W^{\delta,s,p}(\Omega_{\delta})} &\mbox{if  }u\in\mathcal{X}_0^{\delta,s,p}(\Omega),\\
+\infty &\mbox{otherwise},
\end{array}\right.
\end{equation}
and $I_{0}^{1,p}:L^p(\Omega)\mapsto[0,+\infty]$ defined as
\begin{equation}\label{fun0}
I_{0}^{1,p}(u)\vcentcolon=\left\{ 
\begin{array}{ll} 
\gamma(N,p)^{\frac1p}\|\nabla u\|_{L^p(\Omega)} &\mbox{if  }u\in W_0^{1,p}(\Omega),\\
+\infty &\mbox{otherwise}, 
\end{array}\right.
\end{equation}
with $\gamma(N,p)$ defined in \eqref{constante}.\newline 
At a first stage we will prove that, as $\delta\to0^+$, the sequence of functionals $\{I_{\delta}^{s,p}\}_{\delta>0}$ $\Gamma$-converges, with respect to the strong-$L^p(\Omega)$ topology, to the functional $I_{0}^{1,p}$; namely, for every sequence $\{\delta_h\}_{h\in\mathbb{N}}$ of strictly decreasing positive numbers such that $\delta_h\to0$ as $h\to+\infty$, 
\begin{equation*}
\left(\Gamma-\lim\limits_{h\to+\infty}I_{\delta_h}^{s,p}\right)(u)=I_{0}^{1,p}(u)\quad\text{for all }u\in L^p(\Omega).
\end{equation*}
This will be simply denoted as
\begin{equation*}
\left(\Gamma-\lim\limits_{\delta\to0^+}I_{\delta}^{s,p}\right)(u)=I_{0}^{1,p}(u)\quad\text{for all }u\in L^p(\Omega).
\end{equation*}
To that end, we consider a general functional of the form 
\begin{equation*}
I(u)=\int_{\Omega}\int_{\Omega\cap B(x,\delta)}\omega(x-y,u(x)-u(y))dydx,
\end{equation*}
for a \textit{potential function} $\omega(x,y):\mathbb{R}^N\times\mathbb{R}\mapsto\mathbb{R}$ verifying that, for some $\beta\in\mathbb{R}$, the following limit exists,
\begin{equation}\label{blowup}
\omega^{\circ}(x,y)=\lim\limits_{t\to0^+}\frac{1}{t^{\beta}}\omega(tx,ty).
\end{equation}
Let $\overline{\omega}^{c}:\mathbb{R}^N\mapsto\mathbb{R}$ be the \textit{limit density convexification} of $\overline\omega$,
\begin{equation*}
\overline{\omega}^{c}=\sup\{v:\ v\leq \overline{\omega}\ \text{and}\ v\ \text{convex}\},
\end{equation*}
where $\overline{\omega}:\mathbb{R}^N\mapsto\mathbb{R}$ is the \textit{limit density} of $\omega$,
\begin{equation*}
\overline{\omega}(F)=\int_{\mathbb{S}^{N-1}}\omega^{\circ}(z,Fz)d\sigma(z).
\end{equation*}
Under the hypotheses stated below, given the sequence of rescaled functionals
\begin{equation}\label{scaledfunctional}
I_{\delta}(u)\vcentcolon=\frac{N+\beta}{\delta^{N+\beta}}\int_{\Omega}\int_{\Omega\cap B(x,\delta)}\omega(x-y,u(x)-u(y))dydx.
\end{equation}
we have 
\begin{equation}\label{gl}
\left(\Gamma-\lim\limits_{\delta\to0^+}I_{\delta}\right)(u)=I_{0}(u)\quad\text{for all }u\in L^p(\Omega),
\end{equation}
with
\begin{equation*}
I_0(u)\vcentcolon=\int_{\Omega}\overline{\omega}^{c}(\nabla u)dx.
\end{equation*}
In particular, the $\Gamma$-convergence \eqref{gl} is ensured by the next result that also provides the compactness of uniformly bounded energy sequences.\newline 
Let us set $\tilde{\Omega}\vcentcolon=\{ z=x-y\;:\; x,\,y\in \Omega\}$ and $\mathcal{A}_{\delta}\vcentcolon=\{v\in L^p(\Omega): v=0\ \text{on}\ \partial_{\delta}\Omega\}$.

\begin{theorem}\cite[Theorem 1]{BellCorPed}\label{belcorped} Let $\Omega\subset\mathbb{R}^N$ be a bounded domain with Lipschitz boundary and $\omega:\tilde{\Omega}\times\mathbb{R}\mapsto\mathbb{R}$ satisfying the hypotheses (H1)-(H5) below. Then, the following holds:
\begin{itemize}
\item[a)]{\it Compactness:} For each $\delta>0$, let $u_{\delta}\in \mathcal{A}_{\delta}$ such that
\begin{equation*}
\sup\limits_{\delta} I_{\delta}(u_{\delta})<+\infty.
\end{equation*}
Then, there exist $u\in W_0^{1,p}(\Omega)$ such that, for a subsequence (that we do not relabel),
\begin{equation*}
u_{\delta}\to u\ \text{strong in}\ L^p(\Omega)\qquad\text{as } \delta\to0^+.
\end{equation*}
\item[b)] $\Gamma$-liminf inequality: For each $\delta>0$ let $u_{\delta}\in\mathcal{A}_{\delta}$ and $u\in W_0^{1,p}(\Omega)$ such that $u_{\delta}\to u$ strong in $L^p(\Omega)$ as $\delta\to0^+$. Then,
\begin{equation*}
I_0(u)\leq \liminf\limits_{\delta\to0^+}I_{\delta}(u_{\delta}).
\end{equation*}
\item[c)] $\Gamma$-limsup inequality: For each $\delta>0$ and $u\in W_0^{1,p}(\Omega)$ there exist $u_{\delta}\in\mathcal{A}_{\delta}$, called \textit{recovery sequence}, such that $u_{\delta}\to u$ strong in $L^p(\Omega)$ as $\delta\to0^+$ and 
\begin{equation*}
\limsup\limits_{\delta\to0^+}I_{\delta}(u_{\delta})\leq I_0(u).
\end{equation*}
\end{itemize}
\end{theorem}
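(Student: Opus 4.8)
The plan is to establish the three assertions by the now-standard nonlocal-to-local route of Bourgain--Brezis--Mironescu and Ponce type, adapted to a general integrand $\omega$ through the structural hypotheses (H1)--(H5). The common first step for all three parts is the change of variables $y=x-z$ followed by the rescaling $z=\delta\xi$, which rewrites the functional as
\begin{equation*}
I_{\delta}(u)=(N+\beta)\int_{B(0,1)}\int_{\Omega\cap(\Omega+\delta\xi)}\frac{1}{\delta^{\beta}}\,\omega\!\left(\delta\xi,\ \delta\,\frac{u(x)-u(x-\delta\xi)}{\delta}\right)dx\,d\xi .
\end{equation*}
Abbreviating the difference quotient by $D_{\delta}^{\xi}u(x):=\big(u(x)-u(x-\delta\xi)\big)/\delta$ and invoking the blow-up hypothesis \eqref{blowup}, the inner integrand converges as $\delta\to0^{+}$ to $\omega^{\circ}(\xi,D^{\xi}u(x))$ whenever $D_{\delta}^{\xi}u(x)\to\nabla u(x)\cdot\xi$; since $\omega^{\circ}$ is positively $\beta$-homogeneous, passing to polar coordinates $\xi=rz$ with $z\in\mathbb{S}^{N-1}$ and using $(N+\beta)\int_{0}^{1}r^{N-1+\beta}\,dr=1$ makes the normalizing constant absorb exactly the radial integration, so that the formal limit of $I_{\delta}(u)$ is $\int_{\Omega}\int_{\mathbb{S}^{N-1}}\omega^{\circ}(z,\nabla u(x)\cdot z)\,d\sigma(z)\,dx=\int_{\Omega}\overline{\omega}(\nabla u)\,dx$. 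The appearance of $\overline{\omega}^{c}$ rather than $\overline{\omega}$ in the actual $\Gamma$-limit is a relaxation effect, discussed in the treatment of part (b) below.

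For assertion (a), I would use the coercivity contained in (H1)--(H5) (a $p$-growth lower bound on $\omega$) to deduce from $\sup_{\delta}I_{\delta}(u_{\delta})<+\infty$ a uniform bound $\sup_{\delta}\,\delta^{-N}\int_{\Omega}\int_{\Omega\cap B(x,\delta)}\big|\delta^{-1}(u_{\delta}(x)-u_{\delta}(y))\big|^{p}\,dy\,dx<+\infty$. Extending each $u_{\delta}$ by zero outside $\Omega$ is harmless since $u_{\delta}=0$ on $\partial_{\delta}\Omega$, so the same bound holds on a fixed neighbourhood $\Omega'\supset\!\supset\Omega$; together with a $\delta$-uniform nonlocal Poincaré inequality (proved as in Lemma \ref{poinc} and Proposition \ref{belcor}, now keeping track of the dependence on $\delta$) this controls $\|u_{\delta}\|_{L^{p}(\Omega)}$ uniformly. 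The compactness criterion of Bourgain--Brezis--Mironescu--Ponce type then produces a subsequence converging strongly in $L^{p}$ to some $u$ and simultaneously identifies $u\in W^{1,p}(\Omega')$; since $u$ vanishes on $\Omega'\setminus\Omega$, it lies in $W_{0}^{1,p}(\Omega)$.

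Assertion (b), the $\Gamma$-$\liminf$ inequality, is where I expect the real work to be. Given $u_{\delta}\to u$ in $L^{p}(\Omega)$ with $\liminf_{\delta}I_{\delta}(u_{\delta})<+\infty$, I would localize: pass to a subsequence along which the energy densities converge weakly-$*$ to a nonnegative measure $\mu$, and reduce the claim to the pointwise estimate $d\mu/d\mathcal{L}^{N}(x_{0})\ge\overline{\omega}^{c}(\nabla u(x_{0}))$ at almost every Lebesgue point $x_{0}$ of $\nabla u$. Blowing up around $x_{0}$ turns this into a cell problem on a unit cube, with competitors converging in $L^{p}$ to the affine map $x\mapsto\nabla u(x_{0})\cdot x$, hence with rescaled difference quotients $D_{\delta}^{\xi}(\cdot)$ converging weakly to $\nabla u(x_{0})\cdot\xi$. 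Replacing $\omega$ by $\omega^{\circ}$ up to a vanishing error — which uses \eqref{blowup} together with the growth bounds and a truncation of large difference quotients — leaves a lower bound for $(N+\beta)\int_{B_{1}}\int\omega^{\circ}(\xi,D_{\delta}^{\xi}v_{\delta})\,dx\,d\xi$. Then for every \emph{convex} $g\le\overline{\omega}$, Jensen's inequality in the angular variable $z\in\mathbb{S}^{N-1}$ — using that integrating the directional difference quotients against $z$ reconstructs the distributional gradient — yields the limiting lower bound $\int g(\nabla u)$; taking the supremum over such $g$ produces exactly $\overline{\omega}^{c}$, and reassembling the pointwise estimates through $\mu$ completes (b). The genuine obstacle here is not the soft bound $\int\overline{\omega}(\nabla u)$, which would follow from weak lower semicontinuity once $\omega$ is replaced by $\omega^{\circ}$, but the \emph{sharp} one with the convex envelope — i.e.\ that oscillating competitors cannot depress the energy below $\overline{\omega}^{c}$ — which is the relaxation content of the theorem and forces a careful handling of the blow-up error terms and of the nonlocal boundary interactions produced by the rescaling of the cube.

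For assertion (c), I would first build recovery sequences for $u\in C_{0}^{\infty}(\Omega)$: for $\delta$ small $u$ already vanishes on $\partial_{\delta}\Omega$, so $u_{\delta}:=u|_{\Omega}\in\mathcal{A}_{\delta}$; a Taylor expansion gives $D_{\delta}^{\xi}u_{\delta}(x)\to\nabla u(x)\cdot\xi$ uniformly with a uniform bound, so dominated convergence (justified by the growth hypotheses) yields $I_{\delta}(u_{\delta})\to\int_{\Omega}\overline{\omega}(\nabla u)\,dx$. To descend from $\overline{\omega}$ to $\overline{\omega}^{c}$ for affine data $Fx$, I would superimpose on a mesoscale $\eta$ with $\delta\ll\eta\to0$ a laminate between affine pieces realizing $\overline{\omega}^{c}(F)=\sum_{i}\lambda_{i}\,\overline{\omega}(F_{i})$ (Carathéodory), whose interface contribution is of lower order, and diagonalize in $(\delta,\eta)$; gluing such constructions over a fine grid handles piecewise affine $u$, and a density argument in $W_{0}^{1,p}(\Omega)$, together with the lower semicontinuity of the $\Gamma$-$\limsup$ functional and the growth bound controlling the approximation error, extends the recovery property to all $u\in W_{0}^{1,p}(\Omega)$.
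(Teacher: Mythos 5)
This is an imported result: the paper cites it verbatim as Theorem~1 of \cite{BellCorPed} and supplies no proof of its own (indeed it does not even reproduce hypotheses (H4)--(H5), only the separable special case H1)--H3)), so there is no internal argument in the manuscript against which to compare your attempt. Your sketch traces the route one would expect such a proof to take --- rescaling of the kernel to the unit ball, blow-up to $\omega^{\circ}$ with polar coordinates absorbing the normalizing factor $N+\beta$, a BBM/Ponce-type compactness criterion for part~(a), a De~Giorgi blow-up plus weak lower semicontinuity argument for the $\Gamma$-liminf in~(b), and recovery sequences built first for smooth functions and then extended by laminate-plus-density for~(c) --- and this is plausibly the blueprint of the cited paper, but I cannot certify the match since the actual proof lives outside the present manuscript.

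Two cautions, since you are working blind against an external reference. For~(a), the nonlocal Poincar\'e inequalities of Lemma~\ref{poinc} and Proposition~\ref{belcor} have constants that depend on $\delta$; asserting that a $\delta$-uniform version survives the rescaling $(N+\beta)/\delta^{N+\beta}$ is precisely the quantitative heart of the compactness step and would need a self-contained estimate, not a citation. For~(b)--(c), you rightly present the passage from $\overline{\omega}$ to its convex envelope $\overline{\omega}^{c}$ as the essential relaxation content, but note that whenever $\omega^{\circ}(z,\cdot)$ is convex in its scalar second argument --- which holds automatically under the separable hypotheses H1)--H3) that this paper actually invokes --- the map $F\mapsto\omega^{\circ}(z,F\cdot z)$ is convex, hence $\overline{\omega}$ is already convex and $\overline{\omega}^{c}=\overline{\omega}$; in that regime the laminate construction in~(c) is vacuous and the Jensen step in~(b) collapses to ordinary weak lower semicontinuity. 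The relaxation is only genuinely active under the general non-convex hypotheses of \cite{BellCorPed}, and since those are not stated here your sketch is, at the level of what the present paper needs, over-engineered rather than incorrect; where it might actually fail is in the blow-up error control (replacing $\omega$ by $\omega^{\circ}$ uniformly over unbounded difference quotients), which you acknowledge but do not resolve.
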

For a general potential function $\omega(x,y)$, the hypotheses required in Theorem \ref{belcorped} are quite involved but, as it is noted in \cite{BellCorPed}, for a potential function of the form
\begin{equation*}
\omega(x,y)=f(x)g(y),
\end{equation*} 
the necessary hypotheses are the following:
\begin{itemize}
\item[H1)] $f$ is Lebesgue measurable and $g$ is Borel measurable and convex.
\item[H2)] There exists constants $c_0,c_1>0$ and $h\in L^1(\mathbb{S}^{N-1})$ with $h\geq0$ such that, for some $1<p<+\infty$ and $0\leq\alpha<N+p$,
\begin{equation*}
c_0\frac{|y|^p}{|x|^{\alpha}}\leq f(x)g(y)\leq c_1 h\left(\frac{x}{|x|}\right)\frac{|y|^p}{|x|^{\alpha}}\quad\text{ for }x\in\widetilde{\Omega},\ y\in\mathbb{R}.
\end{equation*}
\item[H3)] The functions $f^{\circ}:\mathbb{R}\backslash\{0\}\mapsto\mathbb{R}$ and $g^{\circ}:\mathbb{R}\mapsto\mathbb{R}$ defined as
\begin{equation*}
f^{\circ}(x)\vcentcolon=\lim\limits_{t\to0^+}t^{\alpha}f(tx)\quad\text{and}\quad g^{\circ}(y)\vcentcolon=\lim\limits_{t\to0^+}\frac{1}{t^p}g(ty),
\end{equation*}
are continuous and, for each compact $K\subset\mathbb{R}$,
\begin{equation*}
\lim\limits_{t\to0^+}\sup\limits_{x\in\mathbb{S}^{N-1}}|t^{\alpha}f(tx)-f^{\circ}(x)|=0\quad\text{and}\quad \lim\limits_{t\to0^+}\sup\limits_{K\subset\mathbb{R}}|\frac{1}{t^p}g(ty)-g^{\circ}(y)|=0.
\end{equation*}
\end{itemize}
\vspace{0.2cm}
A straightforward consequence, cf. \cite{Braides}, of the $\Gamma$-convergence and the compactness provided by Theorem \ref{belcorped} is the convergence of minimizers for $I_\delta$, whose existence is guaranteed under the previous hypotheses, cf. \cite{BellCor}. 
\begin{corollary}\label{corGammaconv} In the conditions of Theorem \ref{belcorped}, for each $\delta>0$, let $u_\delta\in\mathcal{X}_0^{\delta,s,p}(\Omega)$ be a minimizer of $I_\delta$. Then, there exists $u_0\in W_0^{1,p}(\Omega)$ a minimizer of $I_0$ such that, up to a subsequence,
\begin{equation*}
u_\delta\to u_0\mbox{  strong in  }L^p(\Omega)\quad\text{as }\delta\to 0^+,
\end{equation*}
and 
\begin{equation*}
\lim\limits_{\delta\to0^+}I_{\delta}(u_{\delta})=I_0(u_0).
\end{equation*} 
\end{corollary}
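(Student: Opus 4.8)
The plan is to run the classical ``fundamental theorem of $\Gamma$-convergence'' (cf. \cite{Braides, Maso}): convergence of minimizers follows from the $\Gamma$-convergence together with the equicoercivity, both supplied by Theorem \ref{belcorped}.

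First I would produce a uniform energy bound along the minimizing sequence $\{u_\delta\}$. Fix any competitor $w\in W_0^{1,p}(\Omega)$ with $I_0(w)<+\infty$ (for the eigenvalue application one takes $w$ in the constraint class $\{\|\cdot\|_{L^p(\Omega)}=1\}$, e.g. a normalized $C_0^\infty(\Omega)$ function). By the $\Gamma$-limsup inequality, part c) of Theorem \ref{belcorped}, there is a recovery sequence $w_\delta\in\mathcal A_\delta$ with $w_\delta\to w$ in $L^p(\Omega)$ and $\limsup_{\delta\to0^+}I_\delta(w_\delta)\le I_0(w)$; since $u_\delta$ minimizes $I_\delta$ and $w_\delta$ is admissible, $I_\delta(u_\delta)\le I_\delta(w_\delta)$, so $\sup_\delta I_\delta(u_\delta)<+\infty$. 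The compactness part a) of Theorem \ref{belcorped} then yields $u_0\in W_0^{1,p}(\Omega)$ and a subsequence, not relabelled, with $u_\delta\to u_0$ strongly in $L^p(\Omega)$.

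Next I would show $u_0$ is a minimizer of $I_0$. The $\Gamma$-liminf inequality, part b), gives $I_0(u_0)\le\liminf_{\delta\to0^+}I_\delta(u_\delta)$. For the opposite bound, take an arbitrary $v\in W_0^{1,p}(\Omega)$ and a recovery sequence $v_\delta\in\mathcal A_\delta$ for $v$ as in c); minimality of $u_\delta$ gives $I_\delta(u_\delta)\le I_\delta(v_\delta)$, hence $\limsup_{\delta\to0^+}I_\delta(u_\delta)\le\limsup_{\delta\to0^+}I_\delta(v_\delta)\le I_0(v)$. Chaining these inequalities gives $I_0(u_0)\le I_0(v)$ for every admissible $v$ (in the constrained setting $u_0$ still satisfies $\|u_0\|_{L^p(\Omega)}=1$, since strong $L^p$-convergence preserves the norm), so $u_0$ minimizes $I_0$. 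Finally, specializing $v=u_0$ sandwiches $\limsup_{\delta\to0^+}I_\delta(u_\delta)\le I_0(u_0)\le\liminf_{\delta\to0^+}I_\delta(u_\delta)$, which proves $\lim_{\delta\to0^+}I_\delta(u_\delta)=I_0(u_0)$.

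The only delicate point — and the (mild) main obstacle — is the first step: one must ensure that the recovery sequences produced by Theorem \ref{belcorped} are genuine competitors for $I_\delta$, i.e. that they belong to $\mathcal A_\delta$ (so that minimality of $u_\delta$ can be invoked) and that their $I_\delta$-energies are bounded uniformly in $\delta$ (so that the compactness statement applies). Both facts are already contained in the statement of Theorem \ref{belcorped} and, for the existence of the minimizers $u_\delta$ themselves, in \cite{BellCor}, so no additional estimates beyond Theorem \ref{belcorped} are needed.
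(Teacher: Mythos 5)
Your proof is correct and is exactly the argument the paper has in mind: the paper dispatches this corollary as a "straightforward consequence, cf.\ \cite{Braides}," i.e.\ it invokes the fundamental theorem of $\Gamma$-convergence (equicoercivity $+$ $\Gamma$-convergence $\Rightarrow$ convergence of minimizers and of minimum values), which is precisely what you have written out in full. The one point worth flagging is that when the corollary is applied to the constrained problem on $\mathcal{S}^{\delta,s,p}(\Omega)$, the recovery sequence $v_\delta$ from part c) of Theorem~\ref{belcorped} need not lie on the unit $L^p$-sphere; one should replace it by $v_\delta/\|v_\delta\|_{L^p(\Omega)}$ and use $\|v_\delta\|_{L^p(\Omega)}\to 1$ together with the positive $1$-homogeneity of $I_\delta$ to preserve the $\limsup$ bound — a small normalization step you gesture at but do not spell out.
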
 
As we see, the $\Gamma$-convergence for the limit $\delta\to0^+$ is a delicate issue that involves a precise rescaling and the construction of the $\Gamma$-limit functional. Instead, the $\Gamma$-convergence in the case $\delta\to+\infty$ will be an easy consequence of monotony of $[\cdot]_{W^{\delta,s,p}(\Omega_{\delta})}$ with respect to $\delta$.\newline
On the other hand, although Theorem \ref{belcorped} provides us with the $\Gamma$-convergence of the norm $[\cdot]_{W^{\delta,s,p}(\Omega_{\delta})}^p$ as $\delta\to0^+$, we also need to guarantee the convergence of the minimax energies \eqref{minimax}. This step is accomplished using some results about the behavior of the minimax values under $\Gamma$-convergence, cf. \cite{DegioMarco}, that we present next for the shake of completeness.\newline 
Let $X$ be a metrizable and locally convex topological vector space and, for every $h\in \mathbb{N}$, let $f_h:X\mapsto [0,+\infty]$ and $g_h:X\backslash\{0\}\mapsto\mathbb{R}$ be two functions such that:
\begin{itemize}
\item[1)] $f_h$ and $g_h$ are both even and positively homogeneous of degree 1,
\item[2)] $f_h$ is convex,
\item[3)] for every $b\in\mathbb{R}$, the restriction of $g_h$ to $\{u\in X\backslash\{0\}: f_h(u)\leq b\}$ is continuous.
\end{itemize}
Let us denote by $\mathcal{C}$ the family of nonempty compact subsets of $X$ and 
\begin{equation*}
\mathcal{C}_k^{h}\vcentcolon=\big\{A\subset\{u\in X: g_h(u)=1\}: A\text{ symmetric and compact, }i(A)\geq k\big\}
\end{equation*}
and define $\mathcal{F}_h^{(k)}:\mathcal{C}\mapsto[0,+\infty]$ as
\begin{equation*}
\mathcal{F}_h^{(k)}(A)=\left\{ 
\begin{array}{ll} 
\max\limits_{u\in A} f_h(u)&\mbox{if  }A\in\mathcal{C}_k^{h},\\
+\infty &\mbox{otherwise}.
\end{array}\right.
\end{equation*}
The set $\mathcal{C}$ will be endowed with the topology induced by the Hausdorff distance inherited from the metric of $X$. Finally, let $f:X\mapsto[0,+\infty]$ and $g:X\mapsto\mathbb{R}$ be two even functions such that $g(0)=0$ and define $\mathcal{C}_k\subseteq\mathcal{C}$ and $\mathcal{F}^{(k)}:\mathcal{C}\mapsto[0,+\infty]$ in an analogous way. The result dealing with the convergence of the minimax energies reads as follows.

\begin{proposition}\cite[Corollary 4.4]{DegioMarco}\label{degiomarzo}
Assume that 
\begin{equation*}
f(u)=\left(\Gamma-\lim\limits_{h\to+\infty}f_h\right)(u),\quad\text{for all } u\in X,
\end{equation*}
and that, for every strictly increasing sequence $\{h_n\}\subset\mathbb{N}$ and every sequence $\{u_n\}_{n\in\mathbb{N}}\subset X\backslash\{0\}$ such that 
\begin{equation*}
\sup\limits_{n\in\mathbb{N}}f_{h_n}(u_n)<+\infty,
\end{equation*}
there exists a subsequence $\{u_{n_j}\}_{j\in\mathbb{N}}\subset X\backslash\{0\}$ converging to some $u\in X$ with 
\begin{equation*}
\lim\limits_{j\to+\infty}g_{h_{n_j}}(u_{n_j})=g(u).
\end{equation*}
Then, for every integer $k\geq1$, the sequence $\{\mathcal{F}_h^{(k)}\}_{h\in\mathbb{N}}$ is asymptotically coercive and
\begin{equation*}
\mathcal{F}^{(k)}(A)=\left(\Gamma-\lim\limits_{h\to+\infty}\mathcal{F}_h^{(k)}\right)(A)\quad\text{for all }A\in\mathcal{C},
\end{equation*}
\begin{equation*}
\inf\limits_{A\in\mathcal{C}}\mathcal{F}^{(k)}(A)=\lim\limits_{h\to+\infty}\left(\inf_{A\in\mathcal{C}}\mathcal{F}_h^{(k)}(A)\right),
\end{equation*}
\begin{equation*}
\inf\limits_{A\in\mathcal{C}_k}\sup\limits_{A}f=\lim\limits_{h\to+\infty}\left(\inf\limits_{A\in\mathcal{C}_k^{h}}\sup\limits_{A}f_h\right).
\end{equation*}
\end{proposition}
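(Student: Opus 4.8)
The plan is to reduce the whole proposition to one assertion: that, on the space $\mathcal{C}$ of nonempty compact subsets of $X$ equipped with the Hausdorff distance, the functionals $\mathcal{F}_h^{(k)}$ $\Gamma$-converge to $\mathcal{F}^{(k)}$ and the family $\{\mathcal{F}_h^{(k)}\}$ is equi-coercive. Granting this, the fundamental theorem of $\Gamma$-convergence (cf. \cite{Braides,Maso}) yields at once the asymptotic coercivity of $\{\mathcal{F}_h^{(k)}\}$, the convergence of the infima $\inf_{\mathcal{C}}\mathcal{F}^{(k)}=\lim_{h}\inf_{\mathcal{C}}\mathcal{F}_h^{(k)}$, and the convergence of (quasi-)minimizers; the last displayed identity of the statement is then merely the observation that $\inf_{A\in\mathcal{C}}\mathcal{F}^{(k)}(A)=\inf_{A\in\mathcal{C}_k}\sup_{A}f$, because $\mathcal{F}^{(k)}\equiv+\infty$ off $\mathcal{C}_k$, and likewise at level $h$. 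So everything rests on $\Gamma$-convergence and equi-coercivity in the space of compact sets.

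For the $\Gamma$-liminf inequality, take $A_h\to A$ in the Hausdorff distance; we may assume $\ell:=\liminf_h\mathcal{F}_h^{(k)}(A_h)<+\infty$ and pass to a subsequence attaining $\ell$, so that $A_h\in\mathcal{C}_k^{h}$ and $\sup_{A_h}f_h\le\ell+o(1)$. First one checks $A\in\mathcal{C}_k$: symmetry and compactness pass to the Hausdorff limit; for $u\in A$ pick $u_h\in A_h$ with $u_h\to u$, note $\sup_h f_h(u_h)<+\infty$, and apply the standing hypothesis along a subsequence — the limit being forced to be $u$ — to get $1=g_h(u_h)\to g(u)$, so $g\equiv1$ on $A$ and in particular $0\notin A$; the bound $i(A)\ge k$ then follows from the continuity property of the Krasnosel'skii genus (when $i(A)<\infty$, a symmetric open neighbourhood $N$ of $A$ has $i(N)=i(A)$, and $A_h\subset N$ eventually forces $k\le i(A_h)\le i(N)=i(A)$). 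With $A\in\mathcal{C}_k$ secured, the $\Gamma$-liminf inequality for the $f_h$ gives, for every $u\in A$, $f(u)\le\liminf_h f_h(u_h)\le\ell$, hence $\mathcal{F}^{(k)}(A)=\sup_A f\le\ell$.

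The $\Gamma$-limsup (recovery) inequality is the core of the argument and, I expect, the principal obstacle. Given $A\in\mathcal{C}_k$ with $\sup_A f<+\infty$ (otherwise nothing to prove), one must construct $A_h\in\mathcal{C}_k^{h}$ with $A_h\to A$ and $\sup_{A_h}f_h\le\sup_A f+o(1)$. The idea is to build a recovery \emph{map} rather than a recovery sequence: cover the compact set $A$ by finitely many small balls, at each centre $u^i$ take a recovery sequence $u_h^i\to u^i$ with $f_h(u_h^i)\to f(u^i)$, and glue them through a partition of unity on $A$ into a continuous map $\Phi_h:A\to X$ with $\Phi_h\to\mathrm{id}$ uniformly; convexity of $f_h$ (Jensen) keeps $\sup_{u\in A}f_h(\Phi_h(u))\le\sup_A f+o(1)$, and uniformity holds since only finitely many limits enter. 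Symmetrising to $\widetilde\Phi_h(u):=\tfrac12\bigl(\Phi_h(u)-\Phi_h(-u)\bigr)$ produces an odd continuous map, still uniformly close to the identity (hence uniformly bounded away from $0$, as $A$ is compact with $0\notin A$), whose energy is again controlled by evenness and convexity of $f_h$. Since $\sup_h f_h(\widetilde\Phi_h(u))<+\infty$, the standing hypothesis forces $g_h(\widetilde\Phi_h(u))\to g(u)=1$ uniformly on $A$, so for $h$ large the normalised map $u\mapsto\widetilde\Phi_h(u)/g_h(\widetilde\Phi_h(u))$ is well defined (by continuity of $g_h$ on sublevels of $f_h$), odd, continuous, valued in $\{g_h=1\}$, and still close to the identity. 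Taking $A_h$ to be its image: $A_h$ is compact and symmetric, $A_h\to A$, and the existence of an odd continuous surjection $A\twoheadrightarrow A_h$ together with the monotonicity of the genus gives $i(A_h)\ge i(A)\ge k$; thus $A_h\in\mathcal{C}_k^{h}$, and $1$-homogeneity plus the uniform convergences yield $\sup_{A_h}f_h\to\sup_A f$.

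It remains to establish equi-coercivity. If $\sup_n\mathcal{F}_{h_n}^{(k)}(A_{h_n})\le C$ along some increasing $\{h_n\}$, then every point of every $A_{h_n}$ lies in $\{f_{h_n}\le C\}\cap\{g_{h_n}=1\}$, and the standing hypothesis says precisely that any sequence choosing one point from each $A_{h_n}$ is precompact; since each $A_{h_n}$ is itself compact, this forces $\overline{\bigcup_n A_{h_n}}$ to be compact, and Blaschke's selection principle extracts a Hausdorff-convergent subsequence of $\{A_{h_n}\}$ whose limit is nonempty (because $i(A_{h_n})\ge k\ge1$) and compact, i.e. an element of $\mathcal{C}$. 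Feeding the $\Gamma$-convergence and equi-coercivity just proved into the fundamental theorem of $\Gamma$-convergence then gives all the conclusions, the two infimum identities being the restatements noted above. The genuinely delicate points are the simultaneous continuity, oddness and $g_h$-normalisation of the recovery map in the $\limsup$ step, and the passage of the constraint $g_h=1$ to $g=1$ in the $\liminf$ step; the remainder is soft.
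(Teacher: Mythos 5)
This proposition is not proved in the paper: it is quoted verbatim as \cite[Corollary 4.4]{DegioMarco}, so there is no in-paper proof to compare your argument against. With that caveat, your sketch follows the natural route (and, as far as I can tell, the same general framework as Degiovanni--Marzocchi): work in the hyperspace $\mathcal{C}$ of nonempty compact sets with the Hausdorff metric, prove $\Gamma$-convergence of $\mathcal{F}_h^{(k)}$ to $\mathcal{F}^{(k)}$ there together with equi-coercivity, and read off the three conclusions.

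The individual steps are sound. In the $\liminf$ step you correctly pass symmetry, compactness, and (via the neighbourhood property of the Krasnosel'skii genus) the bound $i(A)\geq k$ to the Hausdorff limit, and you use the standing compactness hypothesis precisely where it is needed, to transfer the constraint $g_h=1$ to $g=1$ and thus exclude $0\in A$. In the $\limsup$ step the recovery-map construction (finite cover, recovery sequences at the centres, partition of unity, convexity of $f_h$, symmetrisation, and $g_h$-normalisation using $1$-homogeneity) is the right device; your uniform convergence $g_h(\widetilde\Phi_h(\cdot))\to 1$ on $A$ does go through by the compactness argument you indicate, and the genus inequality $i(A_h)\geq i(A)$ via the odd continuous surjection is stated in the correct direction. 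The equi-coercivity via the compactness hypothesis plus Blaschke selection is also correct, once one notes that each $A_{h_n}$ is already compact so the union is totally bounded.

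Two small things to tighten. First, at the start you say the fundamental theorem of $\Gamma$-convergence ``yields at once the asymptotic coercivity''; that is backwards --- (equi-/asymptotic) coercivity is a hypothesis of that theorem, not a conclusion, and you in fact establish it separately at the end, so the opening sentence should simply defer to that later step. Second, the $\limsup$ construction needs the standard diagonalisation over the mesh $\varepsilon$ of the cover (for each fixed $\varepsilon$ one only gets $\|\Phi_h^\varepsilon-\mathrm{id}\|_\infty\leq C\varepsilon+o_h(1)$, so one must let $\varepsilon=\varepsilon_h\to 0$ slowly); you evidently have this in mind, but it is the kind of step worth writing out, as it is where the ``recovery map'' actually becomes a recovery sequence of sets.
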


\begin{proposition}\cite[Corollary 3.3]{DegioMarco}\label{degiomarzo1}
Let $Y$ a vector subspace of $X$ such that 
\begin{equation*}
\{u\in X\backslash\{0\}:\ g(u)>0\text{ and }f(u)<+\infty\}\subseteq Y
\end{equation*}
and let $\tau_Y$ be any topology on $Y$ which makes $Y$ a metrizable and locally convex topological vector space such that, for every $b\in\mathbb{R}$, the restriction of $g$ to
\begin{equation*}
\{u\in Y\backslash\{0\}:\ f(u)\leq b\}
\end{equation*}
is $\tau_Y$-continuous. Then the minimax values defined in the space $Y$ agree with those defined in the
originary space $X$.
\end{proposition}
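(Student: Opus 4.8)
The plan is to compare directly the two minimax values attached to the data $(f,g)$, namely $\lambda_k^X:=\inf_{A\in\mathcal{C}_k}\sup_{u\in A}f(u)$, where $\mathcal{C}_k$ is the family of symmetric, $X$-compact subsets of $\{g=1\}$ of Krasnosel'skii genus $\geq k$, and its analogue $\lambda_k^Y$ formed with $\tau_Y$-compact subsets of $\{u\in Y:g(u)=1\}$ of $\tau_Y$-genus $\geq k$; one must show they coincide, for every $k$, and likewise for the variants written over the full family of compact sets. First I would dispose of the sets on which $f$ is unbounded: since $\sup_A f=+\infty$ contributes nothing to either infimum (and both infima are finite under the standing assumptions), it suffices to compare, at each finite level $b$, the restricted infima over $\{A:\sup_A f\leq b\}$. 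The elementary but decisive remark is that any such level-$b$ competitor already lives in $Y$: if $u\in A$ then $g(u)=1>0$ and $f(u)\leq b<+\infty$, so $u$ belongs to $\{v\in X\setminus\{0\}:g(v)>0,\ f(v)<+\infty\}\subseteq Y$. Hence, at level $b$, both competitor families consist of subsets of one and the same set $S_b:=\{u\in Y:g(u)=1,\ f(u)\leq b\}$.

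The core of the proof is then to show that, on $S_b$, being ``symmetric compact of genus $\geq k$'' means the same thing whether compactness and genus are measured with the topology $X$ induces on $S_b$ or with the one $\tau_Y$ induces. For this I would argue that these two topologies on $S_b$ are comparable --- one is finer than the other --- exploiting that $\tau_Y$ renders $g$ continuous on the sublevel set $\{v\in Y\setminus\{0\}:f(v)\leq b\}$, together with the lower semicontinuity of $f$ (which is in force because $f$ arises as a $\Gamma$-limit) and the fact that $X$ and $(Y,\tau_Y)$ are both metrizable locally convex topological vector spaces. Granted comparability, the classical fact that two comparable Hausdorff topologies agree on any subset compact for the finer of the two gives immediately: a symmetric $A\subseteq S_b$ is $X$-compact if and only if it is $\tau_Y$-compact, and on such an $A$ the odd continuous maps into spheres are the same for both topologies, so the genus is the same. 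Consequently $\{A\in\mathcal{C}_k:\sup_A f\leq b\}=\{A\in\mathcal{C}_k^Y:\sup_A f\leq b\}$ and $\sup_A f$ is topology-independent on this common family; letting $b\to+\infty$ yields $\lambda_k^X=\lambda_k^Y$, and the same reasoning, applied to $\mathcal{F}^{(k)}$ over all compact sets, gives the remaining equalities.

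I expect the only real obstacle to be the topological comparison on $S_b$ in the second step: verifying that the $X$-subspace topology and $\tau_Y$ are nested on this sublevel set is precisely where the compatibility assumption on $\tau_Y$ (continuity of $g$ on each $\{f\leq b\}$) must be used in an essential way, and it needs care because neither topology is assumed a priori to refine the other on all of $Y$. The remaining ingredients --- reduction to finite levels, the inclusion $A\subseteq Y$, and the transfer of the genus --- are routine once one uses the positive $1$-homogeneity and evenness of $f$ and $g$.
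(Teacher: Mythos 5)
The paper does not prove this proposition; it is imported verbatim (with light rephrasing) as \cite[Corollary~3.3]{DegioMarco}, so there is no internal proof to compare against. Evaluating your argument on its own merits, there is a genuine gap in the central step, the passage from ``the two topologies are comparable'' to ``a symmetric $A\subseteq S_b$ is $X$-compact if and only if it is $\tau_Y$-compact.'' The topological fact you invoke --- that two comparable Hausdorff topologies agree on a set that is compact \emph{for the finer one} --- only gives one implication. If $\tau_Y$ is finer than the topology $X$ induces on $Y$ (as in the typical application, e.g.\ $X=L^p$, $Y=\mathcal{X}_0^{\delta,s,p}$, $\tau_Y$ the norm topology), then every $\tau_Y$-compact $A$ is $X$-compact with the same genus, giving $\lambda_k^Y\geq\lambda_k^X$. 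But the converse fails: an $X$-compact subset of $S_b$ need not be $\tau_Y$-compact (take a norm-bounded sequence in a Sobolev space that converges strongly in $L^p$ but not in norm; the union of its terms and its limit, symmetrized, is $L^p$-compact but not norm-compact). So the family of $X$-competitors at level $b$ is in general strictly larger than the family of $\tau_Y$-competitors, and the inequality $\lambda_k^X\leq\lambda_k^Y$ --- the nontrivial direction --- is not established by topology comparison alone.

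There is also a softer problem upstream: you assert that the continuity of $g$ on sublevel sets of $f$, together with lower semicontinuity of $f$, forces $\tau_Y$ and the $X$-subspace topology on $S_b$ to be nested, but you do not say which is finer or why, and the hypotheses as recorded in the paper do not state any a priori refinement relation between the two. The actual Corollary~3.3 of Degiovanni--Marzocchi sits inside a chain of results (their Theorem~3.1 and Proposition~3.2) whose standing hypotheses and whose treatment of the hard inclusion are precisely what is missing here. To repair the argument you would need, for each $X$-compact symmetric $A$ of genus $\geq k$ with $\sup_A f\leq b$, a construction of a $\tau_Y$-compact symmetric $A'$ of genus $\geq k$ with $\sup_{A'}f\leq b$ (or $\leq b+\varepsilon$); this is the content you cannot get from the ``comparable topologies agree on compacts'' lemma.
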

In particular, Proposition \ref{degiomarzo} allows us to take the limit on $\delta$ on the minimax values defined \eqref{minimax} while Proposition \ref{degiomarzo1} ensures that the minimax values are not affected by the change of topology produced by restricting the energy functionals.
\vspace{0.2cm}
 
\section{Taking the horizon $\delta\to0^+$}\label{horizon0}
This section is devoted to the proof of Theorem \ref{theigen}. We start by proving a preliminary result concerning the $\Gamma$-limit of the norm $[\cdot]_{W^{\delta,s,p}(\Omega_{\delta})}$.
\begin{lemma}\label{Glimit} Let us consider the rescaled functional 
\begin{equation}\label{funcionaldeltas}
\mathcal{E}_{\delta}^{s,p}(u)\vcentcolon=\frac{p(1-s)}{\delta^{p(1-s)}}[u]_{W^{\delta,s,p}(\Omega_{\delta})}^p
\end{equation} 
defined on $\mathcal{X}_0^{\delta,s,p}(\Omega)$. Then, the $\Gamma$-limit of $\mathcal{E}_{\delta}^{s,p}$ as $\delta\to 0^+$ is given by
\begin{equation}\label{GlimitFunc}
\mathcal{E}_{0}^{1,p}(u)=\gamma(N,p)\int_{\Omega}|\nabla u|^pdx,
\end{equation}
with $\gamma(N,p)$ defined in \eqref{constante}. 
\end{lemma}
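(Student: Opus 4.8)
The plan is to deduce Lemma \ref{Glimit} directly from Theorem \ref{belcorped} by choosing the potential function $\omega$ that produces the rescaled energy \eqref{funcionaldeltas} and then computing the $\Gamma$-limit density. Concretely, I would take
\begin{equation*}
\omega(x,y)=f(x)g(y)=\frac{1}{|x|^{N+sp}}\,|y|^p,\qquad \alpha=N+sp,\quad \beta = p - \alpha = -N - sp,
\end{equation*}
so that $N+\beta = p(1-s)$ and the rescaling factor $\frac{N+\beta}{\delta^{N+\beta}}$ in \eqref{scaledfunctional} is exactly $\frac{p(1-s)}{\delta^{p(1-s)}}$. With this choice $I_\delta$ from \eqref{scaledfunctional} coincides with $\mathcal{E}_\delta^{s,p}$ on $\mathcal{X}_0^{\delta,s,p}(\Omega)$ (using \eqref{normXcero}, i.e. the boundary condition $u=0$ on $\partial_\delta\Omega$ means the double integral over $\Omega\cap B(x,\delta)$ agrees with the one defining the norm). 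The first task is therefore to check that this $f,g$ satisfy hypotheses (H1)--(H3): $g(y)=|y|^p$ is convex and Borel, $f(x)=|x|^{-(N+sp)}$ is measurable, the two-sided bound (H2) holds with $h\equiv 1$ and $c_0=c_1=1$ provided $0\le\alpha=N+sp<N+p$, which is exactly the constraint $sp<p$, true since $s\in(0,1)$; and $f^\circ, g^\circ$ in (H3) are simply $f^\circ(x)=|x|^{-(N+sp)}$ and $g^\circ(y)=|y|^p$ (both $f$ and $g$ are already homogeneous of the right degrees, so the limits are trivial and uniform on $\mathbb{S}^{N-1}$ and on compacts).

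Next I would compute the $\Gamma$-limit functional $I_0(u)=\int_\Omega \overline{\omega}^c(\nabla u)\,dx$. Since $\omega^\circ(x,y)=\omega(x,y)=|x|^{-(N+sp)}|y|^p$, the limit density is
\begin{equation*}
\overline{\omega}(F)=\int_{\mathbb{S}^{N-1}}\omega^\circ(z,F\cdot z)\,d\sigma(z)=\int_{\mathbb{S}^{N-1}}\frac{|F\cdot z|^p}{|z|^{N+sp}}\,d\sigma(z)=\int_{\mathbb{S}^{N-1}}|F\cdot z|^p\,d\sigma(z),
\end{equation*}
because $|z|=1$ on the sphere; writing $F=|F|e$ with $e\in\mathbb{S}^{N-1}$ and using the homogeneity and rotational invariance of surface measure, this equals $\gamma(N,p)\,|F|^p$ with $\gamma(N,p)$ as in \eqref{constante}. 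The map $F\mapsto|F|^p$ is already convex for $p\ge 1$, so $\overline{\omega}^c=\overline{\omega}$ and hence $I_0(u)=\gamma(N,p)\int_\Omega|\nabla u|^p\,dx=\mathcal{E}_0^{1,p}(u)$ on $W_0^{1,p}(\Omega)$, and $+\infty$ otherwise. Combining this identification with parts (b) and (c) of Theorem \ref{belcorped} gives the $\Gamma$-liminf and $\Gamma$-limsup inequalities with respect to strong $L^p(\Omega)$ convergence, which is precisely the assertion $\Gamma\text{-}\lim_{\delta\to0^+}\mathcal{E}_\delta^{s,p}=\mathcal{E}_0^{1,p}$; a minor point is to note the $\Gamma$-limit is independent of the chosen decreasing sequence $\delta_h\to 0$, which is automatic since Theorem \ref{belcorped} is stated for the continuous parameter.

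The only genuinely delicate point is the bookkeeping in matching $\mathcal{E}_\delta^{s,p}$, which is defined via $[\cdot]_{W^{\delta,s,p}(\Omega_\delta)}^p$ (an integral over $\mathcal{D}_\delta\subset\Omega_\delta\times\Omega_\delta$), with the functional $I_\delta$ in \eqref{scaledfunctional}, which integrates over $\Omega\times(\Omega\cap B(x,\delta))$; the reconciliation uses that functions in $\mathcal{X}_0^{\delta,s,p}(\Omega)$ vanish on $\partial_\delta\Omega$, so by \eqref{normXcero} the two expressions agree, and the extension by the value $+\infty$ off $\mathcal{X}_0^{\delta,s,p}(\Omega)$ matches the class $\mathcal{A}_\delta$ in Theorem \ref{belcorped}. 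I expect this identification — rather than any hard analysis — to be the main thing to get right; the computation of $\gamma(N,p)$ and the verification of (H1)--(H3) are routine. I would also remark that the constraint $0\le\alpha<N+p$ needed in (H2) is exactly where $s<1$ enters, consistent with the fact that the limit is the local $p$-Laplacian energy.
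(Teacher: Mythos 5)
Your proof proposal follows the paper's own approach essentially step by step: take $\omega(x,y)=|y|^p/|x|^{N+sp}$, verify (H1)--(H3) (with $\alpha=N+sp<N+p$ since $s<1$), compute the limit density $\overline{\omega}(F)=\gamma(N,p)|F|^p$, note that $F\mapsto|F|^p$ is already convex, and invoke Theorem \ref{belcorped}. The identification of the constant $\gamma(N,p)$ and the verification of the hypotheses match the paper.

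Two caveats are worth recording. First, a small slip: you write $\beta=p-\alpha=-N-sp$, but $p-(N+sp)=p-N-sp$; your subsequent formula $N+\beta=p(1-s)$ is computed from the correct $\beta$, so this is only a typo. Second, and more substantively, you single out as ``the only genuinely delicate point'' the matching of $\mathcal{E}_\delta^{s,p}$ with the functional $I_\delta$ in \eqref{scaledfunctional}, and you resolve it by asserting that \eqref{normXcero} makes the two coincide. That assertion is false: the paper's own discussion immediately after \eqref{normXcero} records the strict inclusion $\Omega\times(\Omega\cap B(x,\delta))\subsetneq\mathcal{D}_\delta$, and for $v\in\mathcal{X}_0^{\delta,s,p}(\Omega)$ the norm $[v]_{W^{\delta,s,p}(\Omega_\delta)}^p$ carries two extra boundary-interaction integrals (over $\partial_\delta\Omega\times(\Omega\cap B(x,\delta))$ and $\Omega\times(\partial_\delta\Omega\cap B(x,\delta))$) that are not part of $\int_\Omega\int_{\Omega\cap B(x,\delta)}$. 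The paper's own proof is also terse here, saying only that the norm ``involves'' the potential function, so the imprecision is shared; but since the extra terms are nonnegative they only help the $\Gamma$-liminf, while the $\Gamma$-limsup requires checking that the scaled boundary terms of the recovery sequence remain under control (or checking that the statement of Theorem 1 in \cite{BellCorPed} in fact covers the functional over $\Omega_\delta$, of which the theorem reproduced here is a paraphrase). As written, the claim that the two expressions literally agree does not hold, and the step should be replaced by one of these two explanations.
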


\begin{proof}
Since the norm $[\cdot]_{W^{\delta,s,p}(\Omega_{\delta})}$ involves the potential function 
\begin{equation*}
\omega_{s,p}(x,y)=\frac{|y|^p}{|x|^{N+ps}},
\end{equation*}
we have $\alpha=N+ps<N+p$ and hypotheses H1)-H3) are clearly satisfied. Next, let us calculate the \textit{limit density convexification} of $\omega_{s,p}$. As the potential function $\omega_{s,p}$ is homogeneous,  condition \eqref{blowup} is satisfied with $\beta=p-(N+ps)$, so that the appropriate scaling in \eqref{scaledfunctional} is $\frac{p(1-s)}{\delta^{p(1-s)}}$, and
\begin{equation*}
\omega^{\circ}_{s,p}(x,y)=\lim\limits_{t\to0^+}\frac{1}{t^\beta}\omega_{s,p}(tx,ty)=\lim\limits_{t\to0^+}\frac{1}{t^{p-(N+ps)}}\omega_{s,p}(tx,ty)=\omega_{s,p}(x,y).
\end{equation*}
Then, since $\omega_{s,p}(x,y)=\omega_{s,p}(|x|,|y|)$, the limit density function is given by
\begin{equation*}
\begin{split}
\overline{\omega}_{s,p}(F)&=\int_{\mathbb{S}^{N-1}}\omega_{s,p}^{\circ}(z,F\cdot z)d\sigma(z)=\int_{\mathbb{S}^{N-1}}\omega_{s,p}(z,F\cdot z)d\sigma(z)\\
&=\int_{\mathbb{S}^{N-1}}\omega_{s,p}(|z|,|F\cdot z|)d\sigma(z)=\int_{\mathbb{S}^{N-1}}\omega_{s,p}(1,|F\cdot z|)d\sigma(z)\\
&=\int_{\mathbb{S}^{N-1}}|F\cdot z|^pd\sigma(z)=\gamma(N,p)|F|^p,
\end{split}
\end{equation*}
where $\displaystyle \gamma(N,p)\vcentcolon=\int_{\mathbb{S}^{N-1}}|e\cdot z|^pd\sigma(z)$ independent of $e=\frac{F}{|F|}\in\mathbb{S}^{N-1}$.\newline 
Let us observe that the denominator of $\omega_{s,p}^{\circ}(x,y)$ plays no role in the integration over the unit sphere $\mathbb{S}^{N-1}$, so that no dependence on $s$ is conserved for the limit density $\overline{\omega}_{s,p}(F)=\overline{\omega}_{p}(F)=\gamma(N,p) |F|^p$. At last, since $p>1$, the limit density $\overline{\omega}_{p}(F)$ is convex and, then, the convexification $\overline{\omega}_{p}^{c}(F)=\overline{\omega}_{p}(F)$. Hence, setting
\begin{equation*}
\mathcal{E}_{0}^{1,p}(u)\vcentcolon=\int_{\Omega}\overline{\omega}_{p}^{c}(\nabla u)dx=\gamma(N,p) \int_{\Omega}|\nabla u|^pdx,
\end{equation*}
because of Theorem \ref{belcorped}, we have: 
\begin{enumerate}
\item  \textit{$\Gamma$-liminf inequality}: Given $u_{\delta}\in\mathcal{A}_{\delta}$ and $u\in W_0^{1,p}(\Omega)$ such that $u_\delta\to u$ strong in $L^p(\Omega)$, we have 
\begin{equation*}
\mathcal{E}_{0}^{1,p}(u)\leq\liminf_{\delta\to 0^+} \mathcal{E}_{\delta}^{s,p}(u_\delta).
\end{equation*} 

\item \textit{$\Gamma$-limsup inequality}: Given $u\in W_0^{1,p}(\Omega)$, there exists a recovery sequence $u_\delta\in \mathcal{A}_{\delta}$ such that $u_\delta\to u$ strong in $L^p(\Omega)$ as $\delta \to 0^+$ and
\begin{equation*}
\limsup_{\delta\to 0^+}\mathcal{E}_{\delta}^{s,p}(u_\delta) \leq \mathcal{E}_{0}^{1,p}(u).
\end{equation*}
\end{enumerate}
We conclude,
\begin{equation*}
\left(\Gamma-\lim\limits_{\delta\to0^+}\mathcal{E}_{\delta}^{s,p}\right)(u)=\mathcal{E}_{0}^{1,p}(u)\quad\text{for all }u\in L^p(\Omega).
\end{equation*}
\end{proof}
\begin{proof}[Proof of Theorem \ref{theigen}]\hfill\newline
The proof is done in several steps. First we prove the $\Gamma$-convergence of the functional $I_{\delta}^{s,p}$ to the functional $I_{0}^{1,p}$ as $\delta\to0^+$. This is derived easily from Lemma \ref{Glimit}. To continue, we prove the convergence of the minimax values given in \eqref{minimax} using the compactness provided by Theorem \ref{belcorped}-a), Propositions \ref{degiomarzo} and \ref{degiomarzo1}. At last we prove the convergence of the associated eigenfunctions as in \cite[Theorem 1.2]{BrascoPariniSquassina}.\newline 
Let us write 
\begin{equation*}
I_{\delta}^{s,p}(u)=\left\{ 
\begin{array}{ll} 
(\mathcal{E}_{\delta}^{s,p}(u))^{\frac1p} &\mbox{if  }u\in\mathcal{X}_0^{\delta,s,p}(\Omega),\\
+\infty &\mbox{otherwise},
\end{array}\right.
\end{equation*}
and
\begin{equation*}
I_{0}^{1,p}(u)=\left\{ 
\begin{array}{ll} 
(\mathcal{E}_{0}^{1,p}(u))^{\frac1p} &\mbox{if  }u\in W_0^{1,p}(\Omega),\\
+\infty &\mbox{otherwise}, 
\end{array}\right.
\end{equation*}
for the functionals $\mathcal{E}_{\delta}^{s,p}$ and $\mathcal{E}_{0}^{1,p}$ given by \eqref{funcionaldeltas} and \eqref{GlimitFunc} respectively. Then, we have
\begin{enumerate}
\item  \textit{$\Gamma$-liminf inequality}: Given $u_{\delta}\in\mathcal{A}_{\delta}$ and $u\in W_0^{1,p}(\Omega)$ such that $u_\delta\to u$ strong in $L^p(\Omega)$, we can assume that 
\begin{equation*}
\liminf_{\delta\to 0^+}\mathcal{E}_{\delta}^{s,p}(u_\delta)<+\infty,
\end{equation*}
since otherwise there is nothing to prove. Hence, for any $\delta>0$ small enough we have $u_{\delta}\in \mathcal{X}_0^{\delta,s,p}(\Omega)$. On the other hand, because of Lemma \ref{Glimit}, we have 
\begin{equation*}
\mathcal{E}_{0}^{1,p}(u)\leq\liminf_{\delta\to 0^+} \mathcal{E}_{\delta}^{s,p}(u_\delta).
\end{equation*} 
Therefore, we get
\begin{equation*}
I_{0}^{1,p}(u)\leq\liminf_{\delta\to 0^+} I_{\delta}^{s,p}(u_\delta).
\end{equation*}
\item \textit{$\Gamma$-limsup inequality}: Given $u\in W_0^{1,p}(\Omega)$, because of Lemma \ref{Glimit}, there exists a recovery sequence $u_\delta\in \mathcal{A}_{\delta}$ such that $u_\delta\to u$ strong in $L^p(\Omega)$ as $\delta \to 0^+$ and
\begin{equation*}
\limsup_{\delta\to 0^+}\mathcal{E}_{\delta}^{s,p}(u_\delta) \leq \mathcal{E}_{0}^{1,p}(u),
\end{equation*}
so that
\begin{equation*}
\limsup_{\delta\to 0^+}I_{\delta}^{s,p}(u_\delta) \leq I_{0}^{1,p}(u).
\end{equation*}
\end{enumerate}
Then, we conclude
\begin{equation}\label{h.1}
\left(\Gamma-\lim\limits_{h\to+\infty}I_{\delta_h,s,p}\right)(u)=I_{0}^{1,p}(u)\quad\text{for all }u\in L^p(\Omega).
\end{equation}
for every sequence $\{\delta_h\}_{h\in\mathbb{N}}$ of strictly decreasing positive numbers such that $\delta_h\to0$ as $h\to+\infty$.\newline
Let us note that, since the first eigenvalues of \eqref{eigenproblem} and \eqref{eigenproblem0} are global minimums, i.e.,
\begin{equation*}
\lambda_{1}^{\delta,s,p}=\min\limits_{u\in\mathcal{S}^{\delta,s,p}(\Omega)}[u]_{W^{\delta,s,p}(\Omega_{\delta})}^p\quad\text{   and   }\quad \lambda_{1}^{0,1,p}=\min\limits_{u\in\mathcal{S}^{1,p}(\Omega)}\|\nabla u\|_{L^p(\Omega)}^p,
\end{equation*}
because of Corollary \ref{corGammaconv} we deduce that
\begin{equation*}
\frac{p(1-s)}{\delta^{p(1-s)}}\lambda_{1}^{\delta,s,p}\to\gamma(N,p)\lambda_{1}^{0,1,p}\quad \text{as}\ \delta\to0^+.
\end{equation*}
To continue, let $1<p<\infty$ and consider the functional $g_p: L^p(\Omega)\mapsto [0,+\infty)$ defined as
\begin{equation*}
g_p(u)\vcentcolon=\|u\|_{L^p(\Omega)}.
\end{equation*}
Next, by Theorem \ref{belcorped}-a), for every strictly increasing sequence $\{h_n\}_{n\in\mathbb{N}}\subset\mathbb{N}$ such that $\delta_{h_n}\to0$ as $h_n\to+\infty$ and any sequence $\{u_{\delta_{h_n}}\}_{n\in\mathbb{N}}$ with $u_{\delta_{h_n}}\in\mathcal{A}_{\delta_{h_n}}$ and
\begin{equation}\label{h.2}
\sup\limits_{n\in\mathbb{N}}I_{\delta_{h_n}}^{s,p}(u_{\delta_{h_n}})<+\infty,
\end{equation} 
there exists $u\in W_0^{1,p}(\Omega)$ and a subsequence simply denoted as $\{u_{\delta_j}\}_{j\in\mathbb{N}}$, such that 
\begin{equation*}
u_{\delta_j}\to u \text{ strong in }L^p(\Omega)\quad \text{as }j\to+\infty.
\end{equation*}
Hence, by definition,
\begin{equation}\label{h.3}
\lim\limits_{j\to+\infty}g_p(u_{\delta_j})=g_p(u).
\end{equation}
Because of \eqref{h.1}, \eqref{h.2} and \eqref{h.3}, the functionals $I_{\delta}^{s,p}$ and $g_p$ satisfy the hypotheses of Proposition \ref{degiomarzo} and, therefore,
\begin{equation}\label{preconv}
\lim\limits_{h\to+\infty}\left(\inf\limits_{A\in\mathcal{C}_{k}^{p}}\sup\limits_{u\in A}I_{\delta_h}^{s,p}(u)\right)=\inf\limits_{A\in\mathcal{C}_{k}^{p}}\sup\limits_{u\in A}I_{0}^{1,p}(u),
\end{equation}
where 
\begin{equation*}
\mathcal{C}_k^{p}\vcentcolon=\big\{A\subset\{u\in L^p(\Omega): g_p(u)=1\}: A\text{ symmetric and compact, }i(A)\geq k\big\}.
\end{equation*}
Moreover, again by Theorem \ref{belcorped}-a), for every $b\in\mathbb{R}$ the restriction of $g_p$ to\break $\{u\in L^p(\Omega): I_{\delta}^{s,p}(u)\leq b \}$ is continuous and therefore, by Proposition \ref{degiomarzo1}, we find that
\begin{equation*}
\inf\limits_{A\in\mathcal{C}_{k}^{p}}\sup\limits_{u\in A}I_{0}^{1,p}(u)=\inf\limits_{A\in\mathcal{H}_{k}^{1,p}}\sup\limits_{u\in A}I_{0}^{1,p}(u)
\end{equation*}
and
\begin{equation*}
\inf\limits_{A\in\mathcal{C}_{k}^{p}}\sup\limits_{u\in A}I_{\delta_n,s,p}(u)=\inf\limits_{A\in\mathcal{H}_{k}^{\delta_n,s,p}}\sup\limits_{u\in A}I_{\delta_n,s,p}(u).
\end{equation*}
Hence, the minimax values with respect to the $\mathcal{X}_0^{\delta,s,p}(\Omega)$-topology are equal to those with respect to the weaker topology $L^p(\Omega)$. Thus, for any sequence $\{\delta_n\}_{n\in\mathbb{N}}$ of positive numbers such that $\delta_n\to0$ as $n\to+\infty$, we conclude
\begin{equation}\label{ceig}
\frac{p(1-s)}{\delta_n^{p(1-s)}}\lambda_{k}^{\delta_n,s,p}\to\gamma(N,p)\lambda_{k}^{0,1,p}\quad \text{as}\ n\to+\infty.
\end{equation}
Finally, it remains to prove the assertion about the convergence of the eigenfunctions $\{\varphi_k^{\delta,s,p}\}_{k\in\mathbb{N}}$ as $\delta\to0^+$.\newline 
Fixed $k\in\mathbb{N}$, we observe that, if $\varphi_k^{\delta,s,p}\in\mathcal{X}_0^{\delta,s,p}(\Omega)$ is an eigenfunction associated to the eigenvalue $\lambda_{k}^{\delta,s,p}$ and such that $\|\varphi_k^{\delta,s,p}\|_{L^p(\Omega)}=1$, then 
\begin{equation*}
[\varphi_k^{\delta,s,p}]_{W^{\delta,s,p}(\Omega_{\delta})}^p=\lambda_{k}^{\delta,s,p}.
\end{equation*}
Hence, because of the eigenvalue convergence proved before, for $\delta>0$ small enough, we have
\begin{equation}\label{bound1}
I_{\delta}^{s,p}(\varphi_k^{\delta,s,p})\leq\left(\gamma(N,p)\lambda_{k}^{0,1,p}\right)^{\frac1p}+C<+\infty
\end{equation}
for some constant $C>0$. Then, because of Theorem \ref{belcorped}-a), there exists $\psi\in W_0^{1,p}(\Omega)$ with $\|\psi\|_{L^p(\Omega)}=1$ and a sequence $\{\delta_n\}_{n\in\mathbb{N}}$ of positive numbers with $\delta_n\to0$ as $n\to+\infty$ such that 
\begin{equation}\label{ceigf}
\varphi_k^{\delta_n,s,p}\to\psi\text{ strong in }L^p(\Omega)\quad\text{as }n\to+\infty.
\end{equation}
On the other hand, each $\varphi_k^{\delta_n,s,p}$ satisfies
\begin{equation*}
     \left\{\begin{array}{rl}
     (-\Delta_p)_{\delta_n}^s\varphi_k^{\delta_n,s,p}=\lambda_k^{\delta_n,s,p}|\varphi_k^{\delta_n,s,p}|^{p-2}\varphi_k^{\delta_n,s,p} &\quad\mbox{in}\quad \Omega,\\
                         u=0\mkern+178mu &\quad\mbox{on}\quad \partial_{\delta_n}\Omega,\\
		 \end{array}\right.
\end{equation*} 
so that it is the unique minimizer of the problem
\begin{equation*}
\min\limits_{u\in L^p(\Omega)}\mathcal{F}_{\delta_n}(u)\vcentcolon=\min\limits_{u\in L^p(\Omega)}\left\{ \mathcal{E}_{\delta_n,s,p}(u)+p\int_{\Omega}F_{\delta_n}udx\right\},
\end{equation*}
with $\mathcal{E}_{\delta_n,s,p}$ defined in \eqref{funcionaldeltas} and $F_{\delta_n}=-\frac{p(1-s)}{\delta_n^{p(1-s)}}\lambda_k^{\delta_n,s,p}|\varphi_k^{\delta_n,s,p}|^{p-2}\varphi_k^{\delta_n,s,p}\in L^{p'}(\Omega)$. Furthermore, due to \eqref{ceig} and \eqref{ceigf}, we have $F_{\delta_n}\to F_0$ strong in $L^{p'}(\Omega)$ with 
\begin{equation*}
F_0=-\gamma(N,p)\lambda_{k}^{0,1,p}|\psi|^{p-2}\psi.
\end{equation*}
Hence, by \cite[Proposition 6.25]{Maso}, we have
\begin{equation*}
\left(\Gamma-\lim\limits_{n\to+\infty}\mathcal{F}_{\delta_n}\right)(u)=\mathcal{F}_{0}(u)\quad\text{for all }u\in L^p(\Omega),
\end{equation*}
with 
\begin{equation*}
\mathcal{F}_{0}(u)=\mathcal{E}_{0}^{1,p}(u)+p\int_{\Omega}F_{0}udx.
\end{equation*}
Because of Corollary \ref{corGammaconv} and the strict convexity we get that $\psi\in W_0^{1,p}(\Omega)$ is the unique minimizer of the limit problem
\begin{equation*}
\min\limits_{u\in L^p(\Omega)}\mathcal{F}_{0}(u)\vcentcolon=\min\limits_{u\in L^p(\Omega)}\left\{ \mathcal{E}_{0}^{1,p}(u)+p\int_{\Omega}F_{0}udx\right\}.
\end{equation*}
Thus, $\psi\in W_0^{1,p}(\Omega)$ is a (weak) solution of the associated Euler-Lagrange equation, i.e., 
\begin{equation*}
     \left\{\begin{array}{rl}
     -\Delta_p\psi=\lambda_k^{0,1,p}|\psi|^{p-2}\psi &\quad\mbox{in}\quad \Omega,\\
                         \psi=0\mkern+94.45mu &\quad\mbox{on}\quad \partial\Omega,\\
		 \end{array}\right.
\end{equation*} 
and we conclude that $\psi$ is a normalized eigenfunction associated to the eigenvalue $\lambda_k^{0,1,p}$, so that $\varphi_k^{\delta_n,s,p}\to \varphi_k^{0,1,p}$ strong in $L^p(\Omega)$.
\end{proof}

\begin{remark} \label{remarkBBM}
The \textit{$\Gamma$-limsup inequality} proved above in the proof of Theorem \ref{theigen}, can be also obtained using the classical {\it localization} result of Bourgain, Brezis and Mironescu, cf. \cite{BourBrezMiro}. This approach also clarifies why the scaling in Theorem \ref{belcorped} is natural.\newline  
Let $\{\rho_{n}(x)\}_{n\in\mathbb{N}}$ be a sequence of radial mollifiers, i.e.
\begin{equation*}
\rho_n(x)=\rho_n(|x|),\ \rho_n(x)\geq0,\ \int\rho_n(x)dx=1
\end{equation*}
satisfying
\begin{equation*}
\lim\limits_{n\to\infty}\int_{\varepsilon}^{\infty}\rho_n(r)r^{N-1}=0,\ \text{for every}\ \varepsilon>0.
\end{equation*}
\begin{theorem}\label{BoBrMi}\cite[Theorem 2]{BourBrezMiro} Assume $u\in L^p(\Omega)$, $1<p<\infty$. Then, for a constant $C=C(N,p)>0$, we have
\begin{equation*}
\lim\limits_{n\to\infty}\int_{\Omega}\int_{\Omega}\frac{|u(x)-u(y)|^p}{|x-y|^p}\rho_n(x-y)dydx=C\int_{\Omega}|\nabla u|^pdx.
\end{equation*}
with the convention that $\int_{\Omega}|\nabla u|^pdx=+\infty$ if $u\notin W^{1,p}(\Omega)$.
\end{theorem}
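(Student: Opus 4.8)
The plan is to establish the limit separately on smooth functions and then on all of $W^{1,p}(\Omega)$, and finally to handle the convention (the case $u\notin W^{1,p}$) via the converse implication. Throughout write $\Phi_n(u):=\int_{\Omega}\int_{\Omega}\frac{|u(x)-u(y)|^p}{|x-y|^p}\rho_n(x-y)\,dy\,dx$, and observe that, since $\int_{\mathbb{R}^N}\rho_n=1$, the measure $d\mu_n(r):=|\mathbb{S}^{N-1}|\,\rho_n(r)\,r^{N-1}\,dr$ is a probability measure on $(0,+\infty)$ which, by the hypotheses on $\{\rho_n\}$, concentrates at the origin; in particular $\mu_n([\varepsilon,+\infty))\to0$ for each $\varepsilon>0$, and $\int_0^{+\infty}\rho_n(r)r^{N-1}\,dr=|\mathbb{S}^{N-1}|^{-1}$.

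\emph{Smooth functions.} For $u\in C^2(\overline{\Omega})$, Taylor's formula gives $u(x)-u(y)=\nabla u(y)\cdot(x-y)+R(x,y)$ with $|R(x,y)|\le C\,|x-y|^2$. Using $\bigl||a|^p-|b|^p\bigr|\le p(|a|+|b|)^{p-1}|a-b|$ with $a=u(x)-u(y)$ and $b=\nabla u(y)\cdot(x-y)$, and dividing by $|x-y|^p$, one replaces $\frac{|u(x)-u(y)|^p}{|x-y|^p}$ by $\bigl|\nabla u(y)\cdot\tfrac{x-y}{|x-y|}\bigr|^p$ up to an error $O(|x-y|)$; multiplying by $\rho_n(x-y)$, integrating, changing variables to $z=x-y$ and passing to polar coordinates, the main term equals
\[
\int_{\Omega}\Bigl(\int_0^{+\infty}\rho_n(r)r^{N-1}\,dr\Bigr)\Bigl(\int_{\mathbb{S}^{N-1}}|\nabla u(y)\cdot\sigma|^p\,d\sigma\Bigr)dy=C(N,p)\int_{\Omega}|\nabla u|^p\,dy,
\]
where, by rotational invariance, $C(N,p)=\frac{1}{|\mathbb{S}^{N-1}|}\int_{\mathbb{S}^{N-1}}|e\cdot\sigma|^p\,d\sigma$ (equivalently $\gamma(N,p)/|\mathbb{S}^{N-1}|$ with $\gamma$ as in \eqref{constante}). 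The Taylor error contributes at most $C|\Omega|\int_0^{\operatorname{diam}\Omega}r\,d\mu_n(r)$, and the error from replacing $\int_{\Omega}dx$ by $\int_{\mathbb{R}^N}dz$ in the inner integral is controlled by $\mu_n([\varepsilon,+\infty))$; both vanish as $n\to+\infty$, so the statement holds for $u\in C^2(\overline{\Omega})$.

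\emph{Uniform bound and density.} Next I would prove that $\Phi_n(u)\le C\,\|u\|_{W^{1,p}(\Omega)}^p$ for every $u\in W^{1,p}(\Omega)$ with $C$ independent of $n$: after extending $u$ to a $W^{1,p}$ function on a bounded neighbourhood of $\overline{\Omega}$ (using the Lipschitz regularity of $\partial\Omega$, so that the segments $[x,y]$ with $x,y\in\Omega$ stay in the domain of the extension), write $u(x)-u(y)=\int_0^1\nabla u(y+t(x-y))\cdot(x-y)\,dt$, apply Jensen's inequality, and use Fubini with the substitution $w=y+t(x-y)$ together with $\int\rho_n=1$. Since $u\mapsto\Phi_n(u)^{1/p}$ is a seminorm --- it is the $L^p$-norm, with respect to the measure $\rho_n(x-y)\,dx\,dy$ on $\Omega\times\Omega$, of the function $(x,y)\mapsto(u(x)-u(y))/|x-y|$, which depends linearly on $u$ --- for $u\in W^{1,p}(\Omega)$ and $u_k\in C^2(\overline{\Omega})$ with $u_k\to u$ in $W^{1,p}(\Omega)$ one gets $\bigl|\Phi_n(u)^{1/p}-\Phi_n(u_k)^{1/p}\bigr|\le\Phi_n(u-u_k)^{1/p}\le C^{1/p}\|u-u_k\|_{W^{1,p}(\Omega)}$ uniformly in $n$; letting $n\to+\infty$ (smooth case) and then $k\to+\infty$ yields $\Phi_n(u)\to C(N,p)\|\nabla u\|_{L^p(\Omega)}^p$ for every $u\in W^{1,p}(\Omega)$.

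\emph{The converse, and conclusion.} It remains to show that if $u\in L^p(\Omega)$ and $\liminf_n\Phi_n(u)<+\infty$, then $u\in W^{1,p}(\omega)$ for every $\omega\subset\subset\Omega$; once this is known, applying the previous steps on $\omega$ gives $C(N,p)\|\nabla u\|_{L^p(\omega)}^p\le\liminf_n\Phi_n(u)$, so $\nabla u\in L^p(\Omega)$ by monotone convergence and $u\in W^{1,p}(\Omega)$, whence $\Phi_n(u)\to C(N,p)\|\nabla u\|_{L^p(\Omega)}^p$ by the previous paragraph; and, contrapositively, $u\notin W^{1,p}(\Omega)$ forces $\Phi_n(u)\to+\infty$, exactly the stated convention. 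To obtain $u\in W^{1,p}(\omega)$ the plan is to bound translates: mollifying $u$ at scale $\varepsilon$, estimate $|\nabla(u*\eta_\varepsilon)(x)|$ by a local average of $|u(x-z)-u(x)|/|z|$ over $|z|\lesssim\varepsilon$, and through an averaging argument that brings in the kernels $\rho_n$ and the bound $\liminf_n\Phi_n(u)<+\infty$ produce $\|u(\cdot+h)-u\|_{L^p(\omega)}\le C\,|h|$ for all small $h$, whence $u\in W^{1,p}(\omega)$ by the difference-quotient criterion (here $1<p<+\infty$ is used). \textbf{This last step is the main obstacle}: relating the possibly thin, ``ring-like'' kernels $\rho_n$ to a \emph{uniform} difference-quotient estimate is the delicate point of the Bourgain--Brezis--Mironescu argument, and in the original proof it is carried out via a one-dimensional slicing along lines (an integral-geometry reduction) rather than by the crude mollification sketched here.
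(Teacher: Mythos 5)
The paper does not prove this statement: it is quoted verbatim as \cite[Theorem~2]{BourBrezMiro} and invoked as a black box (only in Remark~\ref{remarkBBM}, to obtain the $\Gamma$-limsup inequality for $u\in W^{1,p}_0(\Omega)$). So there is no ``paper's own proof'' to compare against; what follows is an assessment of your sketch on its own terms.

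The two forward steps are sound and essentially the Bourgain--Brezis--Mironescu argument. For $u\in C^2(\overline\Omega)$ the Taylor-expansion computation, the polar-coordinate factorization, and the identification of the constant as $C(N,p)=|\mathbb{S}^{N-1}|^{-1}\int_{\mathbb{S}^{N-1}}|e\cdot\sigma|^p\,d\sigma$ (which indeed matches $\gamma(N,p)/\sigma_{N-1}$ from the paper) are all correct. The uniform bound $\Phi_n(u)\le C\|u\|_{W^{1,p}(\Omega)}^p$ via extension, the fundamental theorem of calculus along segments, Jensen, and Fubini is also standard and correct, as is the density step exploiting that $\Phi_n(\cdot)^{1/p}$ is a seminorm. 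These two steps give $\Phi_n(u)\to C(N,p)\|\nabla u\|^p_{L^p(\Omega)}$ for every $u\in W^{1,p}(\Omega)$, which is all the paper actually uses.

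The genuine gap is the converse direction, and you have honestly flagged it. As stated, the theorem covers all $u\in L^p(\Omega)$ with the convention that the right-hand side is $+\infty$ when $u\notin W^{1,p}(\Omega)$, so one must show that $\liminf_n\Phi_n(u)<+\infty$ forces $u\in W^{1,p}$. The mollification-plus-translate sketch you propose does not readily deliver a uniform difference-quotient bound from the $\rho_n$'s alone: the kernels may concentrate on thin annuli, so an averaged control of $\|u(\cdot+h)-u\|_{L^p}$ is not a direct consequence of $\Phi_n(u)$ being bounded for a single (or sparse) sequence of $n$'s. The actual Bourgain--Brezis--Mironescu proof reduces to dimension one by slicing along lines, establishes the $N=1$ case (where the kernel structure can be exploited via monotone rearrangement-type arguments), and then integrates over directions. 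Unless you carry out that reduction --- or substitute another mechanism that converts the bilinear bound into a genuine first-order estimate --- the ``$u\notin W^{1,p}\Rightarrow\Phi_n(u)\to+\infty$'' half of the statement remains unproved. Since the paper only needs the forward implication, this gap does not affect the paper's use of the theorem, but it is a real gap in a full proof of the quoted statement.
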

Next, observe that we can rewrite the norm $[u]_{W^{\delta,s,p}(\Omega_{\delta})}$ as
\begin{equation*}
\begin{split}
[u]_{W^{\delta,s,p}(\Omega_{\delta})}&=\int_{\Omega_{\delta}}\int_{\Omega_{\delta}\cap B(x,\delta)}\frac{|u(x)-u(y)|^p}{|x-y|^{N+ps}}dydx\\
&=\int_{\Omega_{\delta}}\int_{\Omega_{\delta}}\frac{|u(x)-u(y)|^p}{|x-y|^{p}}\rho_{\delta}(|x-y|)dydx,
\end{split}
\end{equation*}
with $\displaystyle \rho_{\delta}(z)=\frac{\chi_{B(0,\delta)}(|z|)}{|z|^{N+p(s-1)}}$ and $\chi_A$ the characteristic function of the set $A$. In order to fulfill the hypotheses of Theorem \ref{BoBrMi} we normalize $\rho_{\delta}(z)$. Since 
\begin{equation*}
\int \rho_{\delta}(z)dz=\frac{\sigma_{N-1}}{p(1-s)}\delta^{p(1-s)},
\end{equation*}
with $\sigma_{N-1}$ the surface of the unitary sphere $\mathbb{S}^{N-1}$, the sequence of radial mollifiers
\begin{equation}\label{molliresc}
\overline{\rho}_{\delta}(z)=\frac{1}{\sigma_{N-1}}\frac{p(1-s)}{\delta^{p(1-s)}}\frac{\chi_{B(0,\delta)}(|z|)}{|z|^{N+p(s-1)}}=\frac{1}{\sigma_{N-1}}\frac{p(1-s)}{\delta^{p(1-s)}}\rho_{\delta}(z),
\end{equation}
satisfy the hypotheses of Theorem \ref{BoBrMi}. In particular, the scaling in $\delta$ coincides with the one of Theorem \ref{belcorped} and 
\begin{equation}\label{molliresc2}
\frac{1}{\sigma_{N-1}}\frac{p(1-s)}{\delta^{p(1-s)}}[u]_{W^{\delta,s,p}(\Omega_{\delta})}=\frac{1}{\sigma_{N-1}}\mathcal{E}_{\delta_n,s,p}(u)
\end{equation}
with the functional $\mathcal{E}_{\delta_n,s,p}(u)$ defined in Lemma \ref{Glimit}.
\begin{proposition}[\textit{$\Gamma$-limsup inequality}] 
Let $u\in L^p(\Omega)$ and let $\{\delta_n\}_{n\in\mathbb{N}}$ be a sequence of positive numbers such that $\delta_n\to0$ as $n\to+\infty$. Then, there exists a sequence $\{u_n\}_{n\in\mathbb{N}}\subset\mathcal{X}_0^{\delta,s,p}(\Omega)$ such that 
\begin{equation*}
\limsup_{n\to+\infty}\mathcal{E}_{\delta_n,s,p}(u_n)\leq \mathcal{E}_{0}^{1,p}(u).
\end{equation*}
\end{proposition}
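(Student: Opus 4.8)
The plan is to use the Bourgain--Brezis--Mironescu localization theorem (Theorem \ref{BoBrMi}) with the rescaled radial mollifiers $\overline{\rho}_{\delta_n}$ of \eqref{molliresc}: one first builds the recovery sequence for $u\in C_0^\infty(\Omega)$ (where one may simply take $u_n=u$), and then passes to a general $u\in L^p(\Omega)$ by density and a diagonal extraction. If $u\notin W_0^{1,p}(\Omega)$ then $\mathcal{E}_0^{1,p}(u)=+\infty$ and there is nothing to prove (take $u_n\equiv0$), so one may assume $u\in W_0^{1,p}(\Omega)$; in fact the construction will yield $u_n\to u$ in $L^p(\Omega)$ with $\lim_{n\to+\infty}\mathcal{E}_{\delta_n,s,p}(u_n)=\mathcal{E}_0^{1,p}(u)$, i.e. the $\Gamma$-$\limsup$ inequality with equality.

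For $\phi\in C_0^\infty(\Omega)$ with $K:=\mathrm{supp}\,\phi$, one checks that $\phi\in\mathcal{X}_0^{\delta,s,p}(\Omega)$ for every $\delta>0$ (it vanishes on $\partial_\delta\Omega\subset\mathbb{R}^N\setminus\Omega$, and since $\phi$ is Lipschitz and bounded the seminorm integrand is $O(|x-y|^{p(1-s)-N})$ near the diagonal, hence integrable), so $u_n:=\phi$ is an admissible competitor. The key elementary observation is that, once $\delta_n<\mathrm{dist}(K,\partial\Omega)$, the nonlocal collar plays no role: if $|x-y|<\delta_n$ and the integrand of $[\phi]_{W^{\delta_n,s,p}(\Omega_{\delta_n})}^p$ does not vanish, then one of $x,y$ lies in $K$, which forces the other into $B(x,\delta_n)\subset\Omega$; hence
\begin{equation*}
[\phi]_{W^{\delta_n,s,p}(\Omega_{\delta_n})}^p=\int_\Omega\int_{\Omega\cap B(x,\delta_n)}\frac{|\phi(x)-\phi(y)|^p}{|x-y|^{N+ps}}\,dy\,dx=\int_\Omega\int_\Omega\frac{|\phi(x)-\phi(y)|^p}{|x-y|^p}\,\rho_{\delta_n}(x-y)\,dy\,dx
\end{equation*}
with $\rho_{\delta_n}$ as in Remark \ref{remarkBBM}. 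Together with \eqref{molliresc}--\eqref{molliresc2} this gives $\mathcal{E}_{\delta_n,s,p}(\phi)=\sigma_{N-1}\int_\Omega\int_\Omega\frac{|\phi(x)-\phi(y)|^p}{|x-y|^p}\,\overline{\rho}_{\delta_n}(x-y)\,dy\,dx$. Since the $\overline{\rho}_{\delta_n}$ are radial, nonnegative, of unit mass and supported in $B(0,\delta_n)$ with $\delta_n\to0$, Theorem \ref{BoBrMi} applies and yields $\mathcal{E}_{\delta_n,s,p}(\phi)\to\sigma_{N-1}C(N,p)\int_\Omega|\nabla\phi|^p\,dx$; one then uses that for unit-mass radial mollifiers the localization constant is $C(N,p)=\sigma_{N-1}^{-1}\int_{\mathbb{S}^{N-1}}|e\cdot z|^p\,d\sigma(z)$, cf. \cite{BourBrezMiro}, so that $\sigma_{N-1}C(N,p)=\gamma(N,p)$ by \eqref{constante} and the limit is exactly $\mathcal{E}_0^{1,p}(\phi)$.

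For a general $u\in W_0^{1,p}(\Omega)$ one takes $\phi_j\in C_0^\infty(\Omega)$ with $\phi_j\to u$ in $W^{1,p}(\Omega)$; then $\phi_j\to u$ in $L^p(\Omega)$ and $\mathcal{E}_0^{1,p}(\phi_j)=\gamma(N,p)\|\nabla\phi_j\|_{L^p(\Omega)}^p\to\mathcal{E}_0^{1,p}(u)$, and a diagonal choice $u_n:=\phi_{j(n)}$ with $j(n)\to+\infty$ slowly enough (so that $\|\phi_{j(n)}-u\|_{L^p(\Omega)}\to0$ and $|\mathcal{E}_{\delta_n,s,p}(\phi_{j(n)})-\mathcal{E}_0^{1,p}(\phi_{j(n)})|\to0$) produces an admissible sequence with $u_n\to u$ in $L^p(\Omega)$ and $\limsup_{n\to+\infty}\mathcal{E}_{\delta_n,s,p}(u_n)=\lim_{j\to+\infty}\mathcal{E}_0^{1,p}(\phi_j)=\mathcal{E}_0^{1,p}(u)$; this is the standard passage of a $\Gamma$-$\limsup$ inequality to the closure of a set dense in energy (cf. \cite{Braides,Maso}). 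The only genuinely substantive point is the identification of the Bourgain--Brezis--Mironescu constant with $\gamma(N,p)$: the normalization chosen in \eqref{molliresc} is precisely the one for which the localization constant equals $\sigma_{N-1}^{-1}\int_{\mathbb{S}^{N-1}}|e\cdot z|^p\,d\sigma(z)$, and this is exactly what makes the scaling $p(1-s)\,\delta^{-p(1-s)}$ in \eqref{funcionaldeltas} natural and consistent with Theorem \ref{belcorped}; the collar bookkeeping in the smooth case and the diagonal argument are routine.
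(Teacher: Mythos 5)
Your proof is correct, but it follows a genuinely different route from the paper's. Both arguments rest on the Bourgain--Brezis--Mironescu localization theorem (Theorem \ref{BoBrMi}) and both end up taking the recovery sequence to be, essentially, $u$ itself; the difference lies in how the collar $\partial_{\delta_n}\Omega$ is disposed of. You first take $\phi\in C_0^\infty(\Omega)$ and observe that, once $\delta_n<\mathrm{dist}(\mathrm{supp}\,\phi,\partial\Omega)$, the collar contributes nothing and $[\phi]_{W^{\delta_n,s,p}(\Omega_{\delta_n})}^p$ collapses to the $\Omega\times\Omega$ integral against $\rho_{\delta_n}$, which lets you apply BBM directly on $\Omega$; you then pass to an arbitrary $u\in W_0^{1,p}(\Omega)$ by density of $C_0^\infty(\Omega)$, continuity of $\mathcal{E}_0^{1,p}$ along $W^{1,p}$-convergent smooth sequences, and a diagonal extraction (equivalently, lower semicontinuity of the $\Gamma$-$\limsup$). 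The paper instead works directly with an arbitrary $u\in W_0^{1,p}(\Omega)$ and its zero extension $u_{\delta_n}$; here the collar does contribute, but the authors fix $j$ and note that, for $n$ large, $\Omega_{\delta_n}\subset\Omega_{\delta_j}$ and the integrand against $\overline{\rho}_{\delta_n}$ is nonnegative, so the $\Omega_{\delta_n}\times\Omega_{\delta_n}$ integral is dominated by the $\Omega_{\delta_j}\times\Omega_{\delta_j}$ one; applying BBM on the fixed domain $\Omega_{\delta_j}$ to $u_{\delta_j}\in W^{1,p}(\Omega_{\delta_j})$ and using that $u_{\delta_j}$ vanishes on $\partial_{\delta_j}\Omega$ again yields the bound $\gamma(N,p)\|\nabla u\|_{L^p(\Omega)}^p$. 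Your approach buys a cleaner collar analysis (everything stays inside $\Omega$) at the cost of the density/diagonal step; the paper's domain-monotonicity trick avoids density entirely. Both correctly identify $\sigma_{N-1}C(N,p)=\gamma(N,p)$.
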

\begin{proof}
If $u\notin W_0^{1,p}(\Omega)$, there is nothing to prove, then, let us take $u\in W_0^{1,p}(\Omega)$ and let $u_{\delta}$ be the function $u$ extended by zero to $\partial_{\delta}\Omega$ for $\delta>0$. If we consider the recovery sequence $\{u_{\delta_n}\}_{n\in\mathbb{N}}$ with $\delta_n\to0$ as $n\to+\infty$, because of \eqref{molliresc2}, \eqref{molliresc} and Theorem \ref{BoBrMi}, for $j\in\mathbb{N},\, j>0$ fixed, we find 
\begin{equation*}
\begin{split}
\limsup_{n\to+\infty}\mathcal{E}_{\delta_n,s,p}(u_{\delta_n})&=\limsup_{n\to+\infty}\frac{p(1-s)}{\delta_n^{p(1-s)}}\int_{\Omega_{\delta_n}}\int_{\Omega_{\delta_n}\cap B(x,\delta_n)}\frac{|u_{\delta_n}(x)-u(y)|^p}{|x-y|^{N+ps}}dydx\\
&=\sigma_{N-1}\limsup_{n\to+\infty}\int_{\Omega_{\delta_n}}\int_{\Omega_{\delta_n}}\frac{|u_{\delta_n}(x)-u_{\delta_n}(y)|^p}{|x-y|^{p}}\overline{\rho}_{\delta_n}(|x-y|)dydx\\
&\leq\sigma_{N-1}\limsup_{n\to+\infty}\int_{\Omega_{\delta_{j}}}\int_{\Omega_{\delta_{j}}}\frac{|u_{\delta_{j}}(x)-u_{\delta_{j}}(y)|^p}{|x-y|^{p}}\overline{\rho}_{\delta_n}(|x-y|)dydx\\
&=\sigma_{N-1}C(N,p)\int_{\Omega_{\delta_{j}}}|\nabla u_{\delta_{j}}|^pdx\\
&=\sigma_{N-1}C(N,p)\int_{\Omega}|\nabla u|^pdx,
\end{split}
\end{equation*}
since $u_{\delta_{j}}=0$ on $\partial_{\delta_{j}}\Omega$ and $u_{\delta_{j}}=u$ in $\Omega$. The constant $C(N,p)$ appearing in Theorem \ref{BoBrMi} takes the form, cf. \cite{BourBrezMiro},
\begin{equation*}
C(N,p)=\frac{1}{\sigma_{N-1}}\int_{\mathbb{S}^{N-1}}|z\cdot e|^pd\sigma,
\end{equation*}
for any unitary vector $e\in\mathbb{S}^{N-1}$. Therefore, $\sigma_{N-1}C(N,p)=\gamma(N,p)$ with $\gamma(N,p)$ the constant appearing in Lemma \ref{horizon0}. Hence, we conclude
\begin{equation*}
\limsup_{n\to+\infty}\mathcal{E}_{\delta_n,s,p}(u)\leq\gamma(N,p)\int_{\Omega}|\nabla u|^pdx=\mathcal{E}_{0}^{1,p}(u).
\end{equation*}
\end{proof}
\end{remark}


\section{Taking the horizon $\delta\to+\infty$}\label{horizoninfty}
In this last section, we focus on the behavior of \eqref{eigenproblem} when $\delta\to+\infty$. Let us note that, by its very definition, the operator $(-\Delta_p)_{\delta}^s$ could be viewed as a restriction of the fractional $p\,$-Laplacian in the sense that long-range interactions (beyond the horizon $\delta>0$) are neglected. Hence, if we take $\delta\to+\infty$ we recover, at least formally, the definition of the standard fractional $p\,$-Laplacian, namely
\begin{equation*}
\lim\limits_{\delta\to +\infty}(-\Delta_p)_{\delta}^s u(x)=2P.V.\int_{\mathbb{R}^N}\frac{|u(x)-u(y)|^{p-2}(u(x)-u(y))}{|x-y|^{N+ps}}dy.
\end{equation*}
Our aim is to make this rigorous by showing Theorem \ref{theigen2}. First, we prove Lemma \ref{isomorfia}. 
\begin{proof}[Proof of Lemma \ref{isomorfia}] 
Using \eqref{Gagliseminorm} and \eqref{normXcero}, we have
\begin{equation}\label{normcomparison3}
[v]_{W^{s,p}(\mathbb{R}^N)}^p-[v]_{W^{\delta,s,p}(\Omega_{\delta})}^p=\iint\limits_{\mathcal{D}\backslash\mathcal{D}_{\delta}}\frac{|v(x)-v(y)|^p}{|x-y|^{N+ps}}dydx\geq0,
\end{equation}
because $\mathcal{D}_{\delta}\subset\mathcal{D}$ for all $\delta>0$. Thus, given $v\in\mathcal{X}_0^{\infty,s,p}(\Omega)$, since $v=0$ on $\Omega^c$ we have $v=0$ on $\partial_{\delta}\Omega$ and, then, the restriction operator,
\begin{equation*}
\begin{array}{l}
     R:\mathcal{X}_0^{\infty,s,p}(\Omega)\mapsto\mathcal{X}_0^{\delta,s,p}(\Omega)\\
              \mkern+100mu v\mapsto R[v]=v\big|_{\Omega_{\delta}}         
		 \end{array}
\end{equation*}
is a continuous linear mapping. Hence, $\mathcal{X}_0^{\infty,s,p}(\Omega)$ can be continuously embedded into $\mathcal{X}_0^{\delta,s,p}(\Omega)$. On the other hand, for a given $v\in\mathcal{X}_0^{\delta,s,p}(\Omega)$, extending $v$ by 0 on $\mathbb{R}^N\backslash \Omega_{\delta}$, we have
\begin{equation*}
\begin{split}
\iint\limits_{\mathcal{D}}\frac{|v(x)-v(y)|^p}{|x-y|^{N+ps}}dydx=&\int_{\Omega_{\delta}}\int_{\Omega_{\delta}\cap B(x,\delta)}\frac{|v(x)-v(y)|^p}{|x-y|^{N+ps}}dydx\\
&+\int_{\Omega_{\delta}}\int_{\Omega_{\delta}\backslash B(x,\delta)}\frac{|v(x)-v(y)|^p}{|x-y|^{N+ps}}dydx\\
&+2\int_{\Omega_\delta}\int_{\Omega_\delta^c}\frac{|v(x)-v(y)|^p}{|x-y|^{N+ps}}dydx
\end{split}
\end{equation*}
Since the first eigenvalue $\lambda_{1}^{\delta,s,p}$ is given as 
\begin{equation*}
\lambda_{1}^{\delta,s,p}=\min\limits_{u\in\mathcal{S}^{\delta,s,p}(\Omega)}[u]_{W^{\delta,s,p}(\Omega_{\delta})}^p=\min\limits_{u\in\mathcal{X}_0^{\delta,s,p}(\Omega)}\frac{[u]_{W^{\delta,s,p}(\Omega_{\delta})}^p}{\|u\|_{L^p(\Omega)}^p},
\end{equation*}
we have 
\begin{equation*}
\begin{split}
\int_{\Omega_{\delta}}\int_{\Omega_{\delta}\backslash B(x,\delta)}\!\!\frac{|v(x)-v(y)|^p}{|x-y|^{N+ps}}dydx
\leq&\frac{1}{\delta^{N+ps}}\int_{\Omega_{\delta}}\int_{\Omega_{\delta}\backslash B(x,\delta)}\!\!|v(x)-v(y)|^pdydx\\
\leq&\frac{2^{p-1}}{\delta^{N+ps}}\int_{\Omega_{\delta}}\int_{\Omega_{\delta}\backslash B(x,\delta)}\!\!|v(x)|^p+|v(y)|^pdydx\\
\leq&\frac{2^{p-1}}{\delta^{N+ps}}\int_{\Omega_{\delta}}\int_{\Omega_{\delta}}|v(x)|^p+|v(y)|^pdydx\\
=&\frac{2^p}{\delta^{N+ps}}\int_{\Omega_{\delta}}\int_{\Omega_{\delta}}|v(x)|^pdydx\\
=&\frac{2^p|\Omega_{\delta}|}{\delta^{N+ps}}\|v\|_{L^p(\Omega)}^{p}\\
\leq&\frac{2^p|\Omega_{\delta}|}{\delta^{N+ps}}\frac{1}{\lambda_1^{\delta,s,p}}\!\int_{\Omega_{\delta}}\!\int_{\Omega_{\delta}\cap B(x,\delta)}\!\!\frac{|v(x)-v(y)|^p}{|x-y|^{N+ps}}dydx.
\end{split}
\end{equation*}
On the other hand,
\begin{equation*}
\begin{split}
\int_{\Omega_{\delta}}\int_{\Omega_{\delta}^c}\frac{|v(x)-v(y)|^p}{|x-y|^{N+ps}}dydx
=&\int_{\Omega}\int_{\Omega_{\delta}^c} \frac{|v(x)|^p}{|x-y|^{N+ps}} dydx\\
\leq&\int_{\Omega}\int_{ (B(x,\delta))^c} \frac{|v(x)|^p}{|x-y|^{N+ps}}dydx\\
=&\int_{\Omega}|v(x)|^p\int_{ (B(0,\delta))^c} \frac{1}{|z|^{N+ps}}dzdx\\
=&\frac{\sigma_{N-1}}{ps}\frac{1}{\delta^{ps}}\|v\|_{L^p(\Omega)}^{p}\\
\leq&\frac{\sigma_{N-1}}{ps}\frac{1}{\delta^{ps}}\frac{1}{\lambda_1^{\delta,s,p}}\int_{\Omega_{\delta}}\int_{\Omega_{\delta}\cap B(x,\delta)}\frac{|v(x)-v(y)|^p}{|x-y|^{N+ps}}dydx,
\end{split}
\end{equation*}
with $\sigma_{N-1}$ the surface of the unitary sphere $\mathbb{S}^{N-1}$. Therefore, 
\begin{equation*}
[v]_{W^{s,p}(\mathbb{R}^N)}\leq C(\delta)[v]_{W^{\delta,s,p}(\Omega_{\delta})}
\end{equation*}
with the constant $\displaystyle C(\delta)=\left(1+\left(\frac{2^p|\Omega_{\delta}|}{\delta^{N+ps}}+\frac{\sigma_{N-1}}{ps}\frac{1}{\delta^{ps}}\right)\frac{1}{\lambda_1^{\delta,s,p}}\right)^{\frac1p}$. As a consequence, given $v\in\mathcal{X}_0^{\delta,s,p}(\Omega)$, the extension operator
\begin{equation*}
\begin{array}{l}
E:\mathcal{X}_0^{\delta,s,p}(\Omega)\mapsto\mathcal{X}_0^{\infty,s,p}(\Omega)\\
        \mkern+92.25mu v\mapsto E[v]= \left\{\begin{array}{rl}
                                 v&\mbox{in}\quad \Omega_{\delta},\\
                                 0&\mbox{in}\quad \mathbb{R}^N\backslash\Omega_{\delta},\\
\end{array}\right.
\end{array}
\end{equation*}
is a linear continuous mapping so that $\mathcal{X}_0^{\delta,s,p}(\Omega)$ can be continuously embedded into $\mathcal{X}_0^{s,p}(\Omega)$. Next, because of \eqref{contenido} and \eqref{normcomparison3}, for any positive $\delta_1,\,\delta_2$ with $\delta_1<\delta_2$, we get
\begin{equation*}
\mathcal{X}_0^{\delta_1,s,p}(\Omega)\subset\mathcal{X}_0^{\delta_2,s,p}(\Omega)\subset \mathcal{X}_0^{s,p}(\Omega).
\end{equation*}
In particular, the sequence of first eigenvalues $\{\lambda_1^{\delta,s,p}\}_{\delta>0}$ is increasing in $\delta$ and uniformly bounded from above by the first eigenvalue $\lambda_{1}^{\infty,s,p}$. Therefore, setting $\omega_{N}$ the volume of the $N$-dimensional unitary ball, we have
\begin{equation*}
\begin{split}
C(\delta)&=\left(1+\left(\frac{2^p|\Omega_{\delta}|}{\delta^{N+ps}}+\frac{\sigma_{N-1}}{ps}\frac{1}{\delta^{ps}}\right)\frac{1}{\lambda_1^{\delta,s,p}}\right)^{\frac1p}\\
&\leq\left(1+\left(\frac{2^p\omega_{N}(diam(\Omega)+2\delta)^N}{\delta^{N+ps}}+\frac{\sigma_{N-1}}{ps}\frac{1}{\delta^{ps}}\right)\frac{1}{\lambda_1^{\delta,s,p}}\right)^{\frac1p}\\
&=1+O\left(\frac{1}{\delta^s}\right).
\end{split}
\end{equation*} 
Thus, we conclude $\lim\limits_{\delta\to+\infty}C(\delta)=1$.
\end{proof}

The following $\Gamma$-convergence result is in the core of the proof of Theorem \ref{theigen2}.
\begin{lemma}\label{Gconvergenceinf}Let us consider the functional 
\begin{equation*}
\mathcal{J}_{\delta,s,p}(u)=[u]_{W^{\delta,s,p}(\Omega_{\delta})}^p,
\end{equation*} 
defined on $\mathcal{X}_0^{\delta,s,p}(\Omega)$. Then, the $\Gamma$-limit of $\mathcal{J}_{\delta,s,p}$ as $\delta\to+\infty$ is given by
\begin{equation}\label{GlimitFuncinf}
\mathcal{J}_{\infty,s,p}(u)\vcentcolon=[u]_{W^{s,p}(\mathbb{R}^N)}^p.
\end{equation}
\end{lemma}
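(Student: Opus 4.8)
The plan is to derive the $\Gamma$-convergence from the monotone structure of the family $\{\mathcal{J}_{\delta,s,p}\}_{\delta>0}$. Throughout, each $u\in\mathcal{X}_0^{\delta,s,p}(\Omega)$ is identified with its extension by zero outside $\Omega_\delta$, so that all functions involved are supported in $\overline{\Omega}$, and $\mathcal{J}_{\delta,s,p}$, $\mathcal{J}_{\infty,s,p}$ are viewed as functionals on $L^p(\Omega)$, set equal to $+\infty$ off $\mathcal{X}_0^{\delta,s,p}(\Omega)$, respectively $\mathcal{X}_0^{\infty,s,p}(\Omega)$; the $\Gamma$-limit is taken in the strong $L^p(\Omega)$ topology. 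Two ingredients are needed. The first is monotonicity and pointwise convergence: for $u$ supported in $\overline{\Omega}$ one has, by \eqref{normXcero},
\begin{equation*}
\mathcal{J}_{\delta,s,p}(u)=\iint\limits_{\mathcal{D}_\delta}\frac{|u(x)-u(y)|^p}{|x-y|^{N+ps}}\,dy\,dx,
\end{equation*}
so that $\delta\mapsto\mathcal{J}_{\delta,s,p}(u)$ is nondecreasing by the inclusion \eqref{contenido}, and since $\bigcup_{\delta>0}\mathcal{D}_\delta=\mathcal{D}$, the monotone convergence theorem for nonnegative integrands over increasing domains gives $\mathcal{J}_{\delta,s,p}(u)\uparrow\mathcal{J}_{\infty,s,p}(u)$ in $[0,+\infty]$ as $\delta\to+\infty$. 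The second is lower semicontinuity: each $\mathcal{J}_{\delta,s,p}$ and $\mathcal{J}_{\infty,s,p}$ is $L^p(\Omega)$-lower semicontinuous, as follows from Fatou's lemma applied to the defining double integral along an a.e.\ convergent subsequence of any $L^p$-convergent sequence.

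Granting these, the $\Gamma$-$\limsup$ inequality holds with the constant recovery sequence: given $u\in L^p(\Omega)$, take $u_\delta\equiv u$; if $u\notin\mathcal{X}_0^{\infty,s,p}(\Omega)$ there is nothing to prove, and otherwise $u\in\mathcal{X}_0^{\delta,s,p}(\Omega)$ for all $\delta$ and $\limsup_{\delta\to+\infty}\mathcal{J}_{\delta,s,p}(u_\delta)=\lim_{\delta\to+\infty}\mathcal{J}_{\delta,s,p}(u)=\mathcal{J}_{\infty,s,p}(u)$ by the pointwise convergence above. For the $\Gamma$-$\liminf$ inequality, let $u_\delta\to u$ strongly in $L^p(\Omega)$, put $L:=\liminf_{\delta\to+\infty}\mathcal{J}_{\delta,s,p}(u_\delta)$, which we may assume finite, and choose $\delta_j\to+\infty$ with $\mathcal{J}_{\delta_j,s,p}(u_{\delta_j})\to L$. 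Fix $\delta_0>0$. For $\delta_j\geq\delta_0$ the function $u_{\delta_j}$ is supported in $\overline{\Omega}$, hence vanishes on $\partial_{\delta_0}\Omega$, and \eqref{normXcero} together with $\mathcal{D}_{\delta_0}\subset\mathcal{D}_{\delta_j}$ yields $\mathcal{J}_{\delta_0,s,p}(u_{\delta_j})\leq\mathcal{J}_{\delta_j,s,p}(u_{\delta_j})$, so that $\liminf_{j\to+\infty}\mathcal{J}_{\delta_0,s,p}(u_{\delta_j})\leq L$. Lower semicontinuity of $\mathcal{J}_{\delta_0,s,p}$ then gives $\mathcal{J}_{\delta_0,s,p}(u)\leq L$, and letting $\delta_0\to+\infty$ and using the pointwise convergence once more, $\mathcal{J}_{\infty,s,p}(u)=\lim_{\delta_0\to+\infty}\mathcal{J}_{\delta_0,s,p}(u)\leq L$. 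The two inequalities together give $\Gamma\text{-}\lim_{\delta\to+\infty}\mathcal{J}_{\delta,s,p}=\mathcal{J}_{\infty,s,p}$.

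Equivalently, the statement is a particular case of the general fact that the $\Gamma$-limit of a nondecreasing sequence of functionals equals the lower semicontinuous envelope of their pointwise supremum, cf.\ \cite{Maso, Braides}; here that supremum is $\mathcal{J}_{\infty,s,p}$, which is already $L^p(\Omega)$-lower semicontinuous and hence coincides with its envelope. The only points requiring care are the compatibility of admissible classes across horizons — a function admissible for a large horizon $\delta_j$ is also admissible for any smaller $\delta_0$ with non-increasing energy, which is precisely the inclusion \eqref{contenido} of the sets $\mathcal{D}_\delta$ — and the interchange of limits as $\delta_0\to+\infty$, which rests on monotone convergence applied to $\mathcal{D}_\delta\uparrow\mathcal{D}$; neither is a genuine obstacle, and the substantial work towards Theorem \ref{theigen2} lies afterwards, in combining this $\Gamma$-convergence with Lemma \ref{isomorfia} and feeding the result into Propositions \ref{degiomarzo} and \ref{degiomarzo1}.
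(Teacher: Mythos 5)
Your proof is correct and takes essentially the same route as the paper: both deduce the $\Gamma$-limit from the monotone increase of $\delta\mapsto\mathcal{J}_{\delta,s,p}$ together with $L^p(\Omega)$-lower semicontinuity of each $\mathcal{J}_{\delta,s,p}$, the paper by directly invoking the abstract fact that the $\Gamma$-limit of a nondecreasing sequence of lower semicontinuous functionals is its pointwise supremum (citing Remark 1.40 of Braides), and you by unpacking that same mechanism into explicit $\Gamma$-liminf and $\Gamma$-limsup verifications before pointing out the identical abstract shortcut at the end.
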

\begin{proof}
The sequence of functionals $\{\mathcal{J}_{\delta,s,p}\}_{\delta>0}$ is a monotone increasing sequence as $\delta\to+\infty$. Moreover, for any $\delta>0$, the functional $\mathcal{J}_{\delta,s,p}$ is lower semicontinuous, cf. \cite{BellCor}. Therefore, because of \cite[Remark 1.40]{Braides}, 

\begin{equation*}
\left(\Gamma-\lim\limits_{\delta\to+\infty}\mathcal{J}_{\delta,s,p}\right)(u)=\mathcal{J}_{\infty,s,p}(u)=[u]_{W^{s,p}(\mathbb{R}^N)}^p,
\end{equation*}
for all $u\in L^p(\Omega)$.
\end{proof}

\begin{proof}[Proof of Theorem \ref{theigen2}]
The proof follows by combining Lemma \ref{Gconvergenceinf} and Lemma $\ref{isomorfia}$ with the arguments used in the proof of Theorem \ref{theigen}.\newline 
Let $\varphi_k^{\delta,s,p}$ be a minimizer of $\mathcal{J}_{\delta,s,p}$ such that $\|\varphi_k^{\delta,s,p}\|_{L^p(\Omega)}=1$. Observe that the sequence $\{\mathcal{J}_{\delta,s,p}(\varphi_k^{\delta,s,p})\}_{\delta>0}$ is monotone increasing in $\delta>0$ and bounded from above by $\mathcal{J}_{\infty,s,p}(\varphi_k^{\infty,s,p})$. Indeed, given $0<\delta_1<\delta_2$, we have 
\begin{equation*} 
\mathcal{J}_{\delta_1,s,p}(\varphi_k^{\delta_1,s,p})\leq \mathcal{J}_{\delta_1,s,p}(\varphi_k^{\delta_2,s,p}) \le \mathcal{J}_{\delta_2,s,p}(\varphi_k^{\delta_2,s,p}).
\end{equation*} 
Thus, because of Theorem \ref{belcorped}-a), there exists a subsequence $\{\varphi_k^{\delta_n,s,p}\}_{n\in\mathbb{N}}$ and a function $\varphi_k^{\infty,s,p}\in \mathcal{X}_0^{\infty,s,p}(\Omega)$, such that
\begin{equation*} 
\varphi_k^{\delta_n,s,p}\to \varphi_k^{\infty,s,p}\ \text{ strong in }L^p(\Omega)\quad\text{as }n\to+\infty.
\end{equation*}
The rest follows similarly to the proof of Theorem \ref{theigen}.
\end{proof}


\begin{thebibliography}{22}

\bibitem{Audoux2018}
B.~Audoux, V.~Bobkov  and E.~Parini, {\em On multiplicity of eigenvalues and
  symmetry of eigenfunctions of the {$p$}-{L}aplacian}, Topological Methods in
  Nonlinear Analysis, {\bf 49} (2018), p.~1.

\bibitem{Azorero1987}
J.~P.~G. Azorero and I.~P. Alonso, {\em Existence and nonuniqueness for the
  {$p$}-{L}aplacian}, Communications in Partial Differential Equations, {\bf
  12} (1987), pp.~126--202.

\bibitem{BellCor}
J.~C. Bellido and C.~Mora-Corral, {\em Existence for nonlocal variational
  problems in peridynamics}, SIAM J. Math. Anal., {\bf 46} (2014),
  pp.~890--916.

\bibitem{BellCorPed}
J.~C. Bellido, C.~Mora-Corral  and P.~Pedregal, {\em Hyperelasticity as a
  {$\Gamma$}-limit of peridynamics when the horizon goes to zero}, Calc. Var.
  Partial Differential Equations, {\bf 54} (2015), pp.~1643--1670.

\bibitem{bellido2020restricted}
J.~C. Bellido and A.~Ortega, {\em A restricted nonlocal operator bridging
  together the Laplacian and the Fractional Laplacian}, 2020.
\newblock To appear in Calc. Var. Partial Differential Equations.

\bibitem{Belloni2002}
M.~Belloni and B.~Kawohl, {\em A direct uniqueness proof for equations
  involving the {$p$}-{L}aplace operator}, manuscripta mathematica, {\bf 109}
  (2002), pp.~229--231.

\bibitem{BourBrezMiro}
J.~Bourgain, H.~Brezis  and P.~Mironescu, {\em Another look at {S}obolev
  spaces}, in Optimal control and partial differential equations, IOS,
  Amsterdam, 2001, pp.~439--455.

\bibitem{Braides}
A.~Braides, {\em {$\Gamma$}-convergence for beginners}, vol.~{\bfseries 22} of
  Oxford Lecture Series in Mathematics and its Applications, Oxford University
  Press, Oxford, 2002.

\bibitem{Brasco2016}
L.~Brasco and E.~Parini, {\em The second eigenvalue of the fractional
  {$p$}-{L}aplacian}, Advances in Calculus of Variations, {\bf 9} (2016),
  pp.~323--355.

\bibitem{BrascoPariniSquassina}
L.~Brasco, E.~Parini  and M.~Squassina, {\em Stability of variational
  eigenvalues for the fractional {$p$}-{L}aplacian}, Discrete Contin. Dyn.
  Syst., {\bf 36} (2016), pp.~1813--1845.

\bibitem{Cuesta1999}
M.~Cuesta, D.~de~Figueiredo  and J.-P. Gossez, {\em The {B}eginning of the
  {F}u{\v{c}}ik {S}pectrum for the {$p$}-{L}aplacian}, Journal of Differential
  Equations, {\bf 159} (1999), pp.~212--238.

\bibitem{Maso}
G.~Dal~Maso, {\em An introduction to {$\Gamma$}-convergence}, vol.~{\bfseries
  8, xiv+340} of Progress in Nonlinear Differential Equations and their
  Applications, Birkh\"{a}user Boston, Inc., Boston, MA, 1993.

\bibitem{DegioMarco}
M.~Degiovanni and M.~Marzocchi, {\em Limit of minimax values under
  {$\Gamma$}-convergence}, Electron. J. Differential Equations, {\bf 2014}
  (2014), pp.~1--19.

\bibitem{DeliaGun}
M.~D'Elia and M.~Gunzburger, {\em The fractional {L}aplacian operator on
  bounded domains as a special case of the nonlocal diffusion operator},
  Comput. Math. Appl., {\bf 66} (2013), pp.~1245--1260.

\bibitem{DiNezzaPalaValdi}
E.~Di~Nezza, G.~Palatucci  and E.~Valdinoci, {\em Hitchhiker's guide to the
  fractional {S}obolev spaces}, Bull. Sci. Math., {\bf 136} (2012),
  pp.~521--573.

\bibitem{BonderSalort}
J.~Fern\'{a}ndez~Bonder and A.~Salort, {\em Stability of solutions for nonlocal
  problems}, Nonlinear Anal., {\bf 200} (2020), pp.~112080, 13.

\bibitem{Franzina}
G.~Franzina and G.~Palatucci, {\em Fractional {$p$}-eigenvalues}, Riv. Math.
  Univ. Parma (N.S.), {\bf 5} (2014), pp.~373--386.

\bibitem{Iannizzotto2014}
A.~Iannizzotto and M.~Squassina, {\em Weyl-type laws for fractional
  p-eigenvalue problems}, Asymptotic Analysis, {\bf 88} (2014), pp.~233--245.

\bibitem{Kawohl2006}
B.~Kawohl and P.~Lindqvist, {\em Positive eigenfunctions for the
  {$p$}-{L}aplace operator revisited}, Analysis, {\bf 26} (2006).

\bibitem{Lindgren2013}
E.~Lindgren and P.~Lindqvist, {\em Fractional eigenvalues}, Calculus of
  Variations and Partial Differential Equations, {\bf 49} (2013), pp.~795--826.

\bibitem{Si}
S.~A. Silling, {\em Reformulation of elasticity theory for discontinuities and
  long-range forces}, J. Mech. Phys. Solids, {\bf 48} (2000), pp.~175--209.

\bibitem{Vazquez1984}
J.~L. V\'{a}zquez, {\em A {S}trong {M}aximum {P}rinciple for some quasilinear
  elliptic equations}, Applied Mathematics {\&} Optimization, {\bf 12} (1984),
  pp.~191--202.

\end{thebibliography}
\end{document}